\newcommand{\kk}{\mathbb{K}}
\newcommand{\B}[2]{B^{#2}_{\nwarrow#1}}
\newcommand{\RB}[2]{{RB_{#1\nearrow}}^{#2}}
\DeclareMathOperator{\borel}{Borel}
\DeclareMathOperator{\Borel}{Borel}
\DeclareMathOperator{\gens}{gens}
\DeclareMathOperator{\Span}{span}
\DeclareMathOperator{\lex}{lex}
\newtheorem{thm}{Theorem}[section]
\newtheorem{cor}[thm]{Corollary}
\newtheorem{lem}[thm]{Lemma}
\newtheorem{prop}[thm]{Proposition}
\newtheorem{question}[thm]{Question}
\newtheorem{proposition}[thm]{Proposition}
\newtheorem{corollary}[thm]{Corollary}
\newtheorem{lemma}[thm]{Lemma}
\theoremstyle{definition}
\newtheorem{definition}[thm]{Definition}
\newtheorem{defn}[thm]{Definition}
\newtheorem{example}[thm]{Example}
\newtheorem{notation}[thm]{Notation}
\newtheorem{remark}[thm]{Remark}
\begin{document}

\title{The Rees algebra of a two-Borel ideal is Koszul}

\author[M.~DiPasquale]{Michael DiPasquale}     
\address{Michael DiPasquale\\     
	Department of Mathematics\\     
	Oklahoma State University\\     
	Stillwater\\
	OK \ 74078-1058\\     
	USA}     
\email{mdipasq@okstate.edu}
\urladdr{\url{http://math.okstate.edu/people/mdipasq/}}   
\author[C.A.~Francisco]{Christopher A. Francisco}
\address{Christopher A. Francisco\\
	Department of Mathematics\\     
	Oklahoma State University\\     
	Stillwater\\
	OK \ 74078-1058\\     
	USA}    
\email{chris.francisco@okstate.edu}
\urladdr{\url{https://math.okstate.edu/people/chris/}}
\author[J.~Mermin]{Jeffrey Mermin}
\address{Jeffrey Mermin\\     
	Department of Mathematics\\     
	Oklahoma State University\\     
	Stillwater\\
	OK \ 74078-1058\\     
	USA}
\email{mermin@math.okstate.edu}     
\urladdr{\url{https://math.okstate.edu/people/mermin/}}   
\author[J.~Schweig]{Jay Schweig}
\address{Jay Schweig\\
	Department of Mathematics\\     
	Oklahoma State University\\     
	Stillwater\\
	OK \ 74078-1058\\     
	USA}
\email{jay.schweig@okstate.edu}
\urladdr{\url{https://math.okstate.edu/people/jayjs/}}

\author[G.~Sosa]{Gabriel Sosa}
\address{Gabriel Sosa\\
	Department of Mathematics and Statistics\\     
	Amhurst College\\     
	Amherst\\
	MA \ 01002\\     
	USA}
\email{gsosa@amherst.edu}
\urladdr{\url{https://www.amherst.edu/people/facstaff/gsosa/}}

\begin{abstract}
Let $M$ and $N$ be two monomials of the same degree, and let $I$ be the smallest Borel ideal containing $M$ and $N$. We show that the toric ring of $I$ is Koszul by constructing a quadratic Gr\"obner basis for the associated toric ideal. Our proofs use the construction of graphs corresponding to fibers of the toric map. As a consequence, we conclude that the Rees algebra is also Koszul.
\end{abstract}

\maketitle

\section{Introduction}\label{s:intro}


An arbitrary graded ring $R$ over a field $R_0=\kk$ is \textit{Koszul} if the residue field $R/R_+\cong \kk$ has a linear resolution over $R$.  If $R\cong S/J$ is the quotient of a polynomial ring $S$ by an ideal $J\subset S$,  then $R$ is Koszul if $J$ has a Gr\"obner basis consisting of quadrics with respect to some monomial order \cite[Section 6.1]{EH}.  Our focus in this paper is when Rees algebras associated to certain Borel ideals are Koszul, and we shall approach this question by determining when the defining ideal of the Rees algebra $R(I)$ has a Gr\"obner basis consisting of quadrics.

Conca and De Negri show that the Rees algebra of a principal Borel ideal (the smallest Borel ideal containing a given monomial) is Koszul, Cohen-Macaulay, and normal. (See, e.g., \cite{BrunsConca,DN99}.)  However, they produce examples of ideals with three Borel generators that are none of the above:

\begin{example}(\cite[Example 1.3]{BrunsConca})\label{e:badthreeborel}
 Consider the smallest Borel ideal containing the monomials $a^{3}c^{3}$, $b^{6}$, and $a^{2}b^{2}c^{2}$.  Then the cubic syzygy $(a^{3}c^{3})^{2}(b^{6})=(a^{2}b^{2}c^{2})^{3}$ is minimal.  In particular, the Rees algebra has a minimal generator in degree three. Moreover, the Rees algebra is neither normal nor Cohen-Macaulay.
\end{example}

These examples naturally raise the question of how the Rees algebras of Borel ideals with two Borel generators behave, which Conca posed at the conference honoring Craig Huneke in July 2016.

\begin{question}[Conca]\label{q:thequestion}
Let $I$ be a Borel ideal with two Borel generators.  Is the Rees algebra of $I$ necessarily Koszul?
\end{question}

In fact, the Rees ideal of an ideal with three Borel generators can have generators of arbitrarily high degree, as the following generalization of de Negri's example shows.  Thus the interest is appropriately concentrated on two-generated Borel ideals.

\begin{example}\label{e:threereallybad}
Let $I$ be the smallest Borel ideal containing $f=a^{r}c^{r(r-2)}$, $g=b^{r(r-1)}$, and $h=a^{r-1}b^{r-1}c^{(r-1)(r-2)}$.  Then the syzygy $f^{r-1}g=h^{r}$ is minimal and represents a degree $r$ generator for the Rees algebra of $I$.
For small values of $r$ ($r\le 10$), computations in Macaulay2~\cite{M2} show that the Rees algebra for $I$ is likewise not Cohen-Macaulay or normal.
\end{example}

\begin{remark}\label{r:concacontd}
Unlike in the principal Borel case, the Rees algebra of a two-Borel-generated ideal is almost never normal. For example, the Rees algebra of the smallest Borel ideal containing $a^2c^2$ and $b^4$ is not normal. Meanwhile, limited computational evidence suggests that Rees algebras of two-Borel-generated ideals are Cohen-Macaulay.
\end{remark}

The present paper gives a positive answer to Conca's question \ref{q:thequestion} for equigenerated Borel ideals with two generators.

\begin{remark}\label{r:sorting}
In the case of a principal Borel ideal, De Negri~\cite{DN99} provides an explicit Gr\"obner basis of quadrics using the operation Sturmfels calls \textit{sorting}~\cite{S96}.  Similar methods can be used to give a Gr\"obner basis of quadrics whenever the ideal is \textit{closed under sorting}.  Two-Borel-generated ideals are almost never closed under sorting, so we must use different techniques to show the existence of a Gr\"obner basis of quadrics in this case.
\end{remark}


The paper is structured as follows.  In Section \ref{s:borelnotation} we define notation used throughout the paper and recall the important definitions for Borel ideals and generators.  In Section \ref{s:reesnotation} we recall important definitions about the Rees algebra, and we define our notation for the variables in the Rees (and toric) ring.  Section \ref{s:graphs} defines a graph associated to any multidegree in a Borel ideal, which will be the key ingredient in our proof.  Section \ref{s:toricproof} contains the proof that the toric ideal of a two-generated Borel ideal is Koszul.  Finally, Section \ref{s:reesproof} translates this result to the Rees ideal.


\section{Notation and background for Borel ideals}\label{s:borelnotation}
Let $R=\kk[x_{1},\dots, x_{n}]$ be the polynomial ring in $n$ variables over an arbitrary field $\kk$.  (In the examples, we refer to the variables as $a,b,c$ instead of $x_{1},x_{2},x_{3}$.)

\begin{definition}\label{d:equigenerated}
A monomial ideal $J\subset R$ is \emph{equigenerated in degree $d$} if its minimal monomial generators all have degree $d$, and simply \emph{equigenerated} if it is equigenerated in some degree $d$.
\end{definition}

Throughout the paper, all ideals of $R$ will be equigenerated unless otherwise stated.

We now recall standard definitions for Borel ideals and Borel generators.  Experts may safely jump to Notation \ref{n:MN}, where we introduce some paper-specific notation, or to Definition \ref{d:sigma}, which introduces the cumulative exponent vector.

\begin{definition}\label{d:borelmove}
Fix a monomial $m\in R$, and suppose that $x_{j}$ divides $m$.  Then for any $i<j$, the operation $\B{j}{i}(m)$, which replaces $m$ with $\frac{x_{i}}{x_{j}}m$, is called \emph{the Borel move replacing $x_{j}$ with $x_{i}$} or simply a \emph{Borel move}.
\end{definition}

\begin{definition}\label{d:borel}
A monomial ideal $I\subset R$ is called \emph{Borel} if it is closed under Borel moves.  More explicitly, $I$ is Borel if, whenever $x_{j}f\in I$ and $i<j$, we must have $x_{i}f\in I$ as well.
\end{definition}

Borel ideals are important because they occur as generic initial ideals (see \cite{BS,Ga}), and they have been studied extensively because the combinatorial condition in their definition makes them susceptible to combinatorial techniques. (See \cite[Section 28]{Pe} for some of this flavor.)

We also define a reverse Borel move as follows.

\begin{definition}\label{d:reverseborel}
Fix a monomial $m\in R$, and suppose that $x_{j}$ divides $m$.  Then for any $k>j$, the operation $\RB{j}{k}(m)$, which replaces $m$ with $\frac{x_{k}}{x_{j}}m$, is called \emph{the reverse Borel move replacing $x_{j}$ with $x_{k}$} or simply a \emph{reverse Borel move}.
\end{definition}

\begin{remark}
 Reverse Borel moves are generally not studied because they are simply Borel moves with a different order on the variables.  So the definition is worth making only if we are already studying regular Borel moves on the same monomials.  (The only ideals closed under both Borel moves and reverse Borel moves are powers of the maximal ideal.  Nevertheless, we will need to study both kinds of moves simultaneously in Section \ref{s:graphs}.)
\end{remark}

\begin{definition}\label{d:borelorder}
  Fix a degree $d$.  We define a partial order $<$, called the \emph{Borel order}, on the degree $d$ monomials of $R$ by setting $m<m'$ whenever $m$ can be obtained from $m'$ by a sequence of Borel moves.  In this case, we say that $m$ \emph{precedes} $m'$ in the Borel order.
\end{definition}

\begin{remark}
  The Borel order, $<$, is not a total order and consequently not a term order.  On the other hand, the usual term orders ``lex'' and ``revlex'' are refinements of $>$ and not of $<$.  (Unfortunately, we cannot simultaneously respect the subscripts by making $x_{1}<x_{2}$ and respect the standard orders by making $x_{1}>x_{2}$.  We have found that respecting the subscripts leads to somewhat less confusion.)  To minimize this notational confusion, we generally use English phrases like ``$m$ comes before $m'$ in the Borel order'' rather than purely symbolic statements like ``$m<m'$''.  
\end{remark}

\begin{definition}\label{d:bgens}
  Let $\mathcal{B}$ be a set of degree $d$ monomials.  Then the smallest Borel ideal containing $\mathcal{B}$ is called the \emph{Borel ideal generated by $\mathcal{B}$}, written $I=\Borel(\mathcal{B})$.  In this case, $\mathcal{B}$ is called a \emph{Borel generating set} for $I$.  $I$ has a unique minimal Borel generating set (namely, its latest monomial generators in the Borel order), whose elements are called its \emph{Borel generators}.  If $I=\Borel(m)$ has only one Borel generator, we say that it is a \emph{principal Borel ideal}.
\end{definition}

For more on Borel generators and principal Borel ideals, see \cite{FMS}.

\begin{definition}\label{d:twoborel}
  Suppose $I=\Borel(M,N)$ has exactly two Borel generators.  We say that $I$ is a \emph{two-Borel} ideal.
\end{definition}

\begin{proposition}\label{p:twoborelissum}
  Let $I=\Borel(M,N)$ be the two-Borel ideal generated by $M$ and $N$.  Then $I$ is the sum of the principal Borel ideals generated by $M$ and $N$, $I=\Borel(M)+\Borel(N)$.  
\end{proposition}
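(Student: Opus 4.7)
The plan is to prove the equality $\Borel(M,N) = \Borel(M) + \Borel(N)$ by double containment, with the only real content being the observation that the sum of two Borel monomial ideals is again Borel.

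First I would show that $\Borel(M) + \Borel(N) \subseteq \Borel(M,N)$. Since $\Borel(M,N)$ is a Borel ideal containing $M$, and $\Borel(M)$ is by definition the \emph{smallest} Borel ideal containing $M$, we must have $\Borel(M) \subseteq \Borel(M,N)$; symmetrically, $\Borel(N) \subseteq \Borel(M,N)$. Adding these two containments gives the desired inclusion.

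For the reverse containment $\Borel(M,N) \subseteq \Borel(M) + \Borel(N)$, it suffices to observe that $\Borel(M) + \Borel(N)$ is itself a Borel ideal containing both $M$ and $N$, since $\Borel(M,N)$ is the smallest such. It contains $M$ and $N$ trivially. To see that it is Borel, take any monomial $m \in \Borel(M) + \Borel(N)$. Because both summands are monomial ideals, $m$ must lie in at least one of $\Borel(M)$ or $\Borel(N)$. If $x_j$ divides $m$ and $i < j$, applying the Borel condition in whichever summand contains $m$ shows that $\frac{x_i}{x_j} m$ again lies in that summand, hence in the sum.

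The argument is entirely formal, and I do not anticipate any serious obstacle; the one point worth being careful about is ensuring that the Borel closure property passes to the sum, which uses the fact that $\Borel(M)$ and $\Borel(N)$ are monomial ideals so that membership of a monomial in the sum forces membership in one of the summands.
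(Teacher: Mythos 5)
Your proof is correct and is the standard argument; the paper in fact states this proposition without proof, treating it as routine. The one point of substance — that a monomial belonging to the sum of two monomial ideals must belong to one of the summands, so the Borel closure property passes to the sum — is exactly the right thing to check, and you handle it properly.
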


\begin{notation}\label{n:MN}
  Suppose $I$ is a two-Borel ideal.  By convention, we set $M$ equal to the lex-earlier of the two Borel generators, and $N$ equal to the lex-later generator.  We refer to monomials of $\Borel(M)$ as $m_{i}$ and monomials of $\Borel(N)\smallsetminus \Borel(M)$ as $n_{i}$. 
\end{notation}

\begin{definition}\label{d:sigma}
  For a monomial $m\in R$, write $m = x_1^{a_1}x_2^{a_2} \cdots x_n^{a_n}$.
  Then the $n$-tuple $(a_{1},\dots, a_{n})$ is called the \emph{exponent vector} of $m$.  We define a new vector, which we call the \emph{cumulative exponent vector} $\sigma(m) = (\sigma_1(m), \sigma_2(m), \ldots, \sigma_n(m))$, by the rule 
\[
\sigma_i(m) = a_i + a_{i+1} + \cdots + a_{n}.
\]
\end{definition}
\begin{example}\label{e:sigma}
  Suppose $m = a^2 cd^2e\in \kk[a,b,c,d,e]$.  Then $\sigma(m) = (6, 4, 4, 3, 1)$.  Also, $\sigma_1(m) = 6, \sigma_2(m) = \sigma_3(m) =4$, etc.
\end{example}

  A few properties of the cumulative exponent vector $\sigma$ are immediate:
  \begin{proposition}\label{p:basicsigma}
    Let $m$ be a monomial.  Then:
  \begin{enumerate}
  \item $\sigma_1(m)$ is the degree of $m$.  
  \item $\sigma_1(m) \geq \sigma_2(m) \geq \cdots \geq \sigma_{n}(m)$. 
  \item $\sigma_i(m) \neq \sigma_{i+1}(m)$ if and only if $x_i \text{ divides } m$.  
  \end{enumerate} 
\end{proposition}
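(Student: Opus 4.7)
The plan is to prove each of the three statements by unwinding the definition of the cumulative exponent vector. Writing $m = x_1^{a_1}\cdots x_n^{a_n}$, the key identity underlying all three parts is the telescoping relation
\[
\sigma_i(m) - \sigma_{i+1}(m) = a_i,
\]
which follows immediately from $\sigma_i(m) = a_i + a_{i+1} + \cdots + a_n$ and $\sigma_{i+1}(m) = a_{i+1} + \cdots + a_n$ (with the convention $\sigma_{n+1}(m) = 0$).

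For part (1), I would observe that $\sigma_1(m) = a_1 + a_2 + \cdots + a_n$ is by definition the sum of the exponents appearing in $m$, which is exactly the total degree. For part (2), the telescoping identity gives $\sigma_i(m) - \sigma_{i+1}(m) = a_i \geq 0$, since exponents in a monomial are non-negative integers; iterating yields the full chain of inequalities. For part (3), the same identity shows that $\sigma_i(m) \neq \sigma_{i+1}(m)$ if and only if $a_i \neq 0$, which by definition of divisibility of monomials is equivalent to $x_i \mid m$.

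There is no genuine obstacle here: the proposition is a warm-up lemma stating that $\sigma$ records the partial sums of exponents read from the right, so every claim reduces to a one-line observation about the exponent vector. In the paper itself I would expect either a brief one-sentence proof pointing to the telescoping identity, or no proof at all beyond the remark that the statements are immediate from Definition~\ref{d:sigma}.
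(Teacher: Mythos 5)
Your proof is correct, and the paper in fact gives no proof at all, introducing the proposition with the phrase ``A few properties of the cumulative exponent vector $\sigma$ are immediate'' --- exactly as you anticipated. The telescoping identity $\sigma_i(m)-\sigma_{i+1}(m)=a_i$ is the right one-line justification for all three parts.
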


  \begin{lemma}\label{l:sigmaformula}  
    Suppose $x_{i}$ divides $m$ and $j>i$. (I.e., $\RB{i}{j}(m)$ exists.)  Then
    \[
    \sigma_k\left(\frac{x_j}{x_i}m\right) = \left\{
    \begin{array}{ll}
      \sigma_k(m) & k \leq i \\
      \sigma_k(m) + 1 & i < k \leq j \\
      \sigma_k(m) & j < k
    \end{array}
    \right.
    \]
  \end{lemma}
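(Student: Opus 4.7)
The plan is a direct computation from the definitions; there is no real obstacle here beyond bookkeeping. I would start by writing $m = x_1^{a_1}\cdots x_n^{a_n}$ and observing that the monomial $m' := \frac{x_j}{x_i}m$ differs from $m$ only in two coordinates of its exponent vector: position $i$ drops from $a_i$ to $a_i-1$ (which is legal because $x_i \mid m$ forces $a_i \geq 1$), and position $j$ rises from $a_j$ to $a_j+1$. All other exponents are unchanged.

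From here I would apply the definition $\sigma_k(m') = a'_k + a'_{k+1} + \cdots + a'_n$ and split into three cases according to where the index $k$ sits relative to $i$ and $j$. If $k \leq i$, then the sum $\sigma_k$ ranges over all indices $\geq k$ and therefore absorbs both the $-1$ at position $i$ and the $+1$ at position $j$; these cancel and $\sigma_k(m') = \sigma_k(m)$. If $i < k \leq j$, then the $-1$ at position $i$ falls outside the range of summation, while the $+1$ at position $j$ still lies inside it, giving $\sigma_k(m') = \sigma_k(m)+1$. If $j < k$, then both perturbations occur strictly before the starting index of the sum, so neither contributes, and $\sigma_k(m') = \sigma_k(m)$.

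This exhausts all cases and matches the three branches of the displayed formula, completing the proof. The only ``step'' worth any care is making sure the inequalities are strict or weak in the right places (in particular, the middle case needs $i<k$ strict so the $-1$ is excluded, but $k \leq j$ weak so the $+1$ is included), which is dictated by the definition of $\sigma_k$ as a sum from index $k$ onward.
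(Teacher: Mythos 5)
Your computation is correct, and it is the only natural argument: the paper itself states this lemma without proof, treating it as immediate from the definition of $\sigma_k$ as a tail sum of the exponent vector. Your careful handling of the strict versus weak inequalities in the middle case is exactly the point that makes the three branches come out right.
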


\begin{proposition}\label{p:borelsigma}
  If $m$ and $m'$ are two monomials of equal degree, then $m \in \borel(m')$ if and only if $\sigma_i(m) \leq \sigma_i(m')$ for all $i$.
\end{proposition}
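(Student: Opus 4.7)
The plan is to prove both implications by induction, using Lemma \ref{l:sigmaformula} as the main technical tool.

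For the forward direction, suppose $m \in \borel(m')$, so there is a sequence of Borel moves carrying $m'$ to $m$. Reading Lemma \ref{l:sigmaformula} in reverse, a single Borel move $x_j \to x_i$ (with $i < j$ and $x_j \mid m'$) leaves $\sigma_k$ unchanged for $k \le i$ or $k > j$ and decreases it by exactly $1$ for $i < k \le j$. Hence every Borel move weakly decreases each $\sigma_k$, and induction on the length of the chain yields $\sigma_k(m) \le \sigma_k(m')$ for all $k$.

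For the reverse direction, suppose $\sigma_k(m) \le \sigma_k(m')$ for all $k$. I would induct on the nonnegative integer $\delta(m,m') = \sum_k (\sigma_k(m') - \sigma_k(m))$. If $\delta = 0$, then all $\sigma_k$ agree; since the exponent vector is recovered from $\sigma$ via $a_k = \sigma_k - \sigma_{k+1}$, this forces $m = m'$ and we are done. Otherwise, let $k$ be the smallest index with $\sigma_k(m') > \sigma_k(m)$; because $\sigma_1$ equals the common degree, $k \ge 2$. Let $j$ be the smallest index $\ge k$ with $x_j \mid m'$; such a $j$ exists because $\sigma_k(m') > 0$, and by Proposition \ref{p:basicsigma}(3) we have $\sigma_k(m') = \sigma_{k+1}(m') = \cdots = \sigma_j(m')$. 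Performing the Borel move $x_j \to x_{k-1}$ on $m'$ produces a monomial $m''$. Lemma \ref{l:sigmaformula} shows that $\sigma_\ell(m'') = \sigma_\ell(m')$ for $\ell < k$ and $\ell > j$, while $\sigma_\ell(m'') = \sigma_\ell(m') - 1$ for $k \le \ell \le j$. For the latter range, the chain $\sigma_\ell(m') = \sigma_k(m') > \sigma_k(m) \ge \sigma_\ell(m)$ shows $\sigma_\ell(m'') \ge \sigma_\ell(m)$. Thus $\sigma_\ell(m'') \ge \sigma_\ell(m)$ for every $\ell$, and $\delta(m,m'') = \delta(m,m') - (j-k+1) < \delta(m,m')$. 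The inductive hypothesis gives $m \in \borel(m'')$, and composing with the Borel move $m' \to m''$ yields $m \in \borel(m')$.

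The only delicate point is guaranteeing that a suitable Borel move exists at each step of the reverse direction, i.e., that $\sigma_k$ can be decremented without pushing any other $\sigma_\ell$ below $\sigma_\ell(m)$. Choosing $i = k-1$ avoids disturbing coordinates where equality between $\sigma(m)$ and $\sigma(m')$ already holds, while choosing $j$ as the first index $\ge k$ where $m'$ actually has a variable to demote ensures, via the plateau $\sigma_k(m') = \cdots = \sigma_j(m')$ from Proposition \ref{p:basicsigma}(3), that every intermediate $\sigma_\ell(m')$ strictly exceeds $\sigma_\ell(m)$ and therefore survives the decrement.
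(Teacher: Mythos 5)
Your proof is correct. The paper states Proposition \ref{p:borelsigma} without proof, treating it (together with Lemma \ref{l:sigmaformula}) as one of the standard facts about Borel moves and cumulative exponent vectors, so there is no argument in the text to compare against; your write-up supplies exactly the kind of argument the authors are implicitly relying on. Both directions are sound: the forward direction correctly reads Lemma \ref{l:sigmaformula} backwards to see that each Borel move weakly decreases every $\sigma_k$, and in the reverse direction your choice of $i=k-1$ (just before the first coordinate where the inequality is strict) and of $j$ as the first index $\geq k$ with $x_j\mid m'$ is precisely what makes the plateau $\sigma_k(m')=\cdots=\sigma_j(m')$ available, so that the decrement never violates $\sigma_\ell(m'')\geq\sigma_\ell(m)$; the induction on $\delta(m,m')$ then closes cleanly, with the base case $\delta=0$ forcing $m=m'$ because the exponent vector is determined by $\sigma$.
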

  
\begin{lem}\label{lem:borel1}
  Suppose $m\in\Borel(m')$ and $\sigma_j(m) \neq \sigma_j(m')$.  Then there exists an index $i < j$ such that the reverse Borel move $\RB{i}{j}(m)$ is contained in $\Borel(m')$.
\end{lem}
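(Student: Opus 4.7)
My plan is to invoke Proposition \ref{p:borelsigma} to translate the problem into arithmetic with cumulative exponent vectors, so that $m \in \Borel(m')$ becomes $\sigma_k(m) \leq \sigma_k(m')$ for every $k$, and the hypothesis $\sigma_j(m) \neq \sigma_j(m')$ sharpens this to $\sigma_j(m) < \sigma_j(m')$. Since $m$ and $m'$ have the same degree, $\sigma_1(m) = \sigma_1(m')$, so the set $\{k < j : \sigma_k(m) = \sigma_k(m')\}$ is nonempty.

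The candidate I would take is $i := \max\{k < j : \sigma_k(m) = \sigma_k(m')\}$. By Lemma \ref{l:sigmaformula}, the move $\RB{i}{j}$ increments $\sigma_k$ by $1$ for $i < k \leq j$ and leaves all other $\sigma_k$ unchanged. To check $\RB{i}{j}(m) \in \Borel(m')$ via Proposition \ref{p:borelsigma}, I would split into three ranges: for $k \leq i$ and for $k > j$, the inequality $\sigma_k(\RB{i}{j}(m)) \leq \sigma_k(m')$ is inherited directly from $m \in \Borel(m')$; for $i < k \leq j$, maximality of $i$ forces $\sigma_k(m) \neq \sigma_k(m')$, hence $\sigma_k(m) < \sigma_k(m')$, so adding $1$ still yields $\sigma_k(m) + 1 \leq \sigma_k(m')$.

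The one genuinely substantive check, and the place where I expect to have to be most careful, is that the reverse Borel move is defined at all, that is, that $x_i$ divides $m$. By Proposition \ref{p:basicsigma}(3) this amounts to verifying $\sigma_i(m) > \sigma_{i+1}(m)$, which I would get from the chain
\[
\sigma_i(m) \;=\; \sigma_i(m') \;\geq\; \sigma_{i+1}(m') \;>\; \sigma_{i+1}(m),
\]
where the first equality is the defining property of $i$, the middle inequality is Proposition \ref{p:basicsigma}(2), and the last strict inequality uses either maximality of $i$ (when $i+1 < j$) or the original hypothesis (when $i+1 = j$). I do not anticipate any obstacle beyond this bookkeeping; the argument is essentially a translation through the $\sigma$ dictionary supplied by Proposition \ref{p:borelsigma} and Lemma \ref{l:sigmaformula}.
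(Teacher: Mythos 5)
Your proof is correct and follows essentially the same route as the paper's: translate everything into $\sigma$-arithmetic via Proposition~\ref{p:borelsigma} and Lemma~\ref{l:sigmaformula}, pick a maximal index $i<j$, and check the three ranges of $k$. The only divergence is the choice of $i$ --- the paper takes the largest $i<j$ with $\sigma_i(m)\neq\sigma_j(m)$, which makes $x_i\mid m$ immediate, whereas you take the largest $i<j$ with $\sigma_i(m)=\sigma_i(m')$, which makes the strict inequalities for $i<k\leq j$ immediate at the cost of your short chain verifying $x_i\mid m$; both choices work.
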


\begin{proof}
  Let $i$ be the greatest index  less than $j$ such that $\sigma_i(m) \neq \sigma_j(m)$.  Then we must have $\sigma_i(m) > \sigma_{i+1}(m) = \sigma_{i+2}(m) = \cdots = \sigma_j(m)$.  Now, whenever $i<k\leq j$,  we have $\sigma_{k}(m) = \sigma_{j}(m) <\sigma_{j}(m')\leq \sigma_{k}(m')$, so in particular $\sigma_{k}(m)<\sigma_{k}(m')$.  
Applying Lemma \ref{l:sigmaformula}, we have $\sigma_k(\frac{x_j}{x_i}m) \leq \sigma_k(m')$ for all such $k$, proving the lemma.
\end{proof}

\section{Notation and background for toric and Rees ideals}\label{s:reesnotation}

Let $I \subseteq R$ be an equigenerated monomial ideal with minimal generating set $\gens(I) = \{w_1, w_2, \ldots, w_t\}$.  We associate two pairs of rings and ideals to $I$, namely its toric ideal and Rees ideal.  The toric ideal is our main object of study.

\begin{definition}\label{d:toric}
Let $S=S_I$ denote the polynomial ring $\kk[Y_w : w\in \gens(I)]$, with a variable $Y_w$ for each generator $w$ of $I$.  The \emph{toric map} is the map $\phi: S \to R$ given by 
\[
\phi(Y_{w})=w,
\]
and extended algebraically.  (Here and throughout, we allow for a free re-indexing of the $w_i$ to avoid unwieldy double subscripts.) The \emph{toric ideal} of $I$, which we write $T(I)$, is the kernel of $\phi$.  The \emph{toric ring} of $I$, denoted $\kk[I]$, is the quotient $S_I/T(I)$ (naturally isomorphic to the image of $\phi$, which is a subring of $R$).
\end{definition}

\begin{notation}\label{n:multidegree}
The toric ring $S_I$ 
inherits the multigrading from $R$.  That is, we have $S_{I}=\displaystyle\bigoplus_{\mu} S_{\mu}$, where $\mu$ ranges over the monomials of $R$, and $S_{\mu}$ is the $\kk$-vector space
\[
S_\mu:=\Span_{\kk}\{Y=\prod Y_w^{a_w}\in S: \phi(Y)=\mu \}.
\]
Observe that,
if the generating degree of $I$ is $d$, then $S_{\mu}=0$ whenever $\mu$ has total degree not divisible by $d$.  We will abuse notation by referring to multidegrees as monomials, or monomials of $R$ as multidegrees, wherever it is convenient to do so.
\end{notation}

\begin{remark}
  Since we name our multidegrees by the monomials of $R$ rather than their exponent vectors, the field $\kk$ inside $S_{I}$ is $S_{1}$ (for the unit monomial $1$) rather than $S_{0}$ (for its exponent vector).
\end{remark}

\begin{proposition}\label{p:toricgens}
The toric ideal $S_{I}$ is generated by binomials of the form $Y-Y'$, where $Y$ and $Y'$ have the same multidegree.
\end{proposition}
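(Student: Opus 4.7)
The plan is to exploit the multigrading on $S_I$ described in Notation \ref{n:multidegree} and the fact that the toric map $\phi$ respects it. First I would verify that $\phi$ is a homomorphism of multigraded $\kk$-algebras: a monomial $Y = \prod_w Y_w^{a_w}\in S_I$ has multidegree $\prod_w w^{a_w}$ by definition, and $\phi(Y) = \prod_w w^{a_w}$, so $\phi$ sends the multigraded component $S_\mu$ into the one-dimensional $\kk$-space $\kk\cdot\mu\subset R$. Consequently $T(I)=\ker\phi$ is a multigraded ideal of $S_I$, and in particular $T(I)$ is generated (as an ideal, or even as a $\kk$-vector space) by its multigraded pieces $T(I)\cap S_\mu$ as $\mu$ ranges over the monomials of $R$.

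Next I would fix a multidegree $\mu$ and analyze an arbitrary element $f\in T(I)\cap S_\mu$. Writing $f=\sum_{i=1}^{s} c_i Y^{(i)}$ with $c_i\in\kk$ and distinct monomials $Y^{(i)}\in S_\mu$, each $Y^{(i)}$ satisfies $\phi(Y^{(i)})=\mu$, so
\[
0 = \phi(f) = \Bigl(\sum_{i=1}^{s} c_i\Bigr)\,\mu,
\]
forcing $\sum_{i=1}^{s} c_i = 0$. Using this relation, one can rewrite
\[
f = \sum_{i=1}^{s} c_i Y^{(i)} = \sum_{i=2}^{s} c_i\bigl(Y^{(i)} - Y^{(1)}\bigr),
\]
a $\kk$-linear combination of binomials of the form $Y-Y'$ with $Y,Y'\in S_\mu$. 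Thus every multigraded piece of $T(I)$ is spanned by such binomials.

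Combining the two steps, every element of $T(I)$ is a $\kk$-linear combination of binomials $Y-Y'$ with $Y$ and $Y'$ sharing a multidegree, and so in particular the ideal $T(I)$ is generated by such binomials, as claimed. There is no real obstacle here; the entire argument is a routine ``multigraded kernels are binomial'' computation. The one point worth flagging is the convention in Notation \ref{n:multidegree} that multidegrees are indexed by monomials of $R$ rather than exponent vectors, which is exactly what makes the identity $\phi(Y)=\mu$ on a monomial $Y\in S_\mu$ hold on the nose and drive the coefficient-sum calculation above.
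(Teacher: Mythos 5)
Your argument is correct and is exactly the standard multigraded-kernel computation that the paper implicitly relies on: the paper states Proposition~\ref{p:toricgens} as background without giving any proof. Nothing is missing from your write-up.
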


The Rees ideal of $I$ is defined similarly to the toric ideal, but allows extra bookkeeping.

\begin{definition}\label{d:rees}
  The \emph{Rees algebra} of $I$ is the $R$-subalgebra
  \[
  \mathcal{R}_{I}=R\oplus It \oplus I^{2}t^{2} \oplus \dots \subset R[t].
  \]

  More carefully, let $\mathcal{R}$ denote the polynomial ring $\kk[x_{1},\dots, x_{n}][Y_w : w\in \gens(I)]$, with a new variable $Y_w$ for each generator $w$ of $I$.  The \emph{Rees map} is the $R$-algebra map $\rho: \mathcal{R} \to R[t]$ given by 
\[
\rho(Y_{w})=wt,
\]
and extended algebraically.  (As with the toric ideal, we allow for a free re-indexing of the $w_i$ to avoid unwieldy double subscripts.) The \emph{Rees ideal} of $I$, which we write $\mathcal{R}(I)$, is the kernel of $\rho$.  The \emph{Rees algebra} of $I$, denoted $\mathcal{R}_{I}$, is the quotient $\mathcal{R}_{I}=\mathcal{R}/\mathcal{R}(I)$ (naturally isomorphic to the image of $\rho$, which is a subring of $R[t]$).
\end{definition}

\begin{notation}\label{n:reesmultideg}
The Rees algebra and Rees ideal have two natural gradings.  The \emph{multidegree} is again inherited from $R$ by setting the multidegree of $Y_{w}$ to $w$.  (We also set the multidegree of $x_{i}$ equal to $x_{i}$, and, remembering that we write multidegree multiplicatively, set the multidegree of $t$  to $1$.)  The $t$-degree, which counts the number of $Y$'s in a monomial, is simply the exponent on $t$.
\end{notation}

\begin{remark}\label{r:defeqnrees}
The defining equations of the Rees ideal are considerably more complex than those of the toric ideal.  Essentially, the generators with $t$-degree $d$ correspond to minimal first syzygies on $I^{d}$ (multiplied by $t^{d}$).  Fortunately, despite the increased complexity, a result of Herzog, Hibi, and Vladoiu (which we discuss in Section~\ref{s:reesproof}) allows us to lift our result from the toric ideal to the Rees ideal.
\end{remark}


\section{Fiber graphs for a toric ideal}\label{s:graphs}

Throughout this section, fix a monomial ideal $I\subset R$, its toric ideal $T(I)$, and a multidegree $\mu$.  We define a directed graph that will help us study the Gr\"obner basis of $T(I)$.

\begin{defn}\label{d:fibergraph}
  Define the \emph{fiber graph of $I$ at $\mu$},  $\Gamma_\mu=\Gamma_\mu(I)$ as follows: The vertices of $\Gamma_\mu$ are the monomials of $S_\mu$, where $Z = Y_{w_1}Y_{w_2} \cdots Y_{w_t}$ and $Z' = Y_{w_1'}Y_{w_2'} \cdots Y_{w_t'}$ are connected by an edge whenever $Z$ can be obtained from $Z'$ by performing a Borel move on one of its factors, and the corresponding reverse Borel move on another.  More precisely, $Z$ and $Z'$ are connected by an edge if there exists a reindexing of the monomials such that $w_1$ is obtained from $w_1'$ by a Borel move, $w_2$ is obtained from $w_2'$ via a reverse Borel move, and $w_i = w_i'$ for $i > 2$.  
\end{defn}


\begin{defn}\label{d:fiberdigraph}
Fix in addition a monomial order $\prec$ on $S_{I}$.   We direct each edge of $\Gamma_\mu$ to point to the later of the two monomials.  That is, if $(Z, Z')$ is an edge of $\Gamma_\mu$ and $Z \prec Z'$, we direct the edge towards $Z$.  We write $\vec{\Gamma}_\mu$ for these directed graphs. 
%
\end{defn}

\begin{example}\label{e:fibergraph}
  Consider $I=\Borel(a^{2}c^{3},b^{4}c)$ and $\mu=a^{3}b^{9}c$.  Then $I$ has 14 minimal monomial generators, of which nine figure in the  multidegree $\mu$.  The graph $\vec{\Gamma}_\mu$ is shown in Figure \ref{f:fibergraph}.  The order $\prec$ is the reverse lex order induced from the variable order
  \[
  Y_{b^{4}c}\succ Y_{b^{5}} \succ Y_{ab^{3}c} \succ Y_{ab^{4}} \succ Y_{a^{3}bc} \succ Y_{a^{3}c^{2}} \succ Y_{a^{2}b^{2}c} \succ Y_{a^{2}bc^{2}} \succ Y_{a^{2}c^{3}}.
  \]
The reason for this choice of order is discussed in Section~\ref{s:toricproof}.
\end{example}

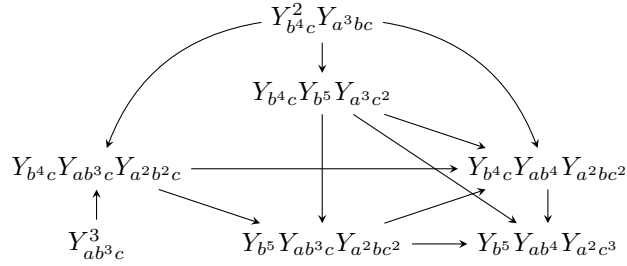
\begin{figure}[hbt]
  \begin{center}
    \begin{tikzpicture}[scale=1,->,>=stealth]
      \node (0) at (0,0) {$Y_{b^{4}c}^2Y_{a^{3}bc}$};
      \node (1) at (0,-1) {$Y_{b^{4}c}Y_{b^{5}}Y_{a^{3}c^{2}}$};
      \node (2) at (-3,-2) {$Y_{b^{4}c}Y_{ab^{3}c}Y_{a^{2}b^{2}c}$};
      \node (3) at (3,-2) {$Y_{b^{4}c}Y_{ab^{4}}Y_{a^{2}bc^{2}}$};
      \node (4) at (-3,-3) {$Y_{ab^{3}c}^3$};
      \node (5) at (0,-3) {$Y_{b^{5}}Y_{ab^{3}c}Y_{a^{2}bc^{2}}$};
      \node (6) at (3,-3) {$Y_{b^{5}}Y_{ab^{4}}Y_{a^{2}c^{3}}$};
      
      \draw (0) to [bend right] (2);
      \draw (2)--(5);
      \draw (0) to [bend left] (3);
      \draw (0)--(1);
      \draw (1)--(3);
      \draw (1)--(5);
      \draw (4)--(2);
      \draw (5)--(6);
      \draw (5)--(3);
      \draw (3)--(6);
      \draw (1)--(6);
      \draw (2)--(3);
    \end{tikzpicture}
    \caption{The fiber graph $\vec{\Gamma}_{a^{3}b^{9}c^{3}}$ for $I=\Borel(a^{2}c^{3}, b^{4}c)$.}    
\label{f:fibergraph}
    \end{center}
\end{figure}

\begin{remark}
  We make two observations here: First, if $I$ is generated in degree $d$, then $\Gamma_\mu$ (and hence $\vec{\Gamma}_\mu$) is empty if the total degree of $\mu$ is not a multiple of $d$.  Second, since the orientation of each $\Gamma_\mu$ is based on a monomial order, it follows that these graphs must be directed acyclic.
\end{remark}

  Our main reason for considering these graphs comes from the following observations of Blasiak \cite{B08} and the fourth author \cite{S11}.

\begin{prop}\label{prop:GQuadratic}
  Suppose every nonempty $\vec{\Gamma}_{\mu}$ has a unique sink.  Then $T(I)$ has a Gr\"obner basis, under $\prec$, consisting of quadric binomials.
\end{prop}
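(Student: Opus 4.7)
The plan is to take as the candidate Gr\"obner basis the set $\mathcal{Q}$ of all quadric binomials arising from edges of the fiber graphs. Explicitly, whenever an edge of some $\vec{\Gamma}_\mu$ joins $Z=Y_{w_1}Y_{w_2}\cdot P$ and $Z'=Y_{w_1'}Y_{w_2'}\cdot P$ (with $P$ the common factor, $w_1$ obtained from $w_1'$ by a Borel move, and $w_2$ obtained from $w_2'$ by the compensating reverse Borel move), strip $P$ and record the quadric $Y_{w_1}Y_{w_2}-Y_{w_1'}Y_{w_2'}$. Each such quadric lies in $T(I)$ because a Borel move paired with the matching reverse Borel move preserves the product in $R$, and orienting the edge from the $\prec$-larger to the $\prec$-smaller endpoint makes the leading monomial of the quadric divide the larger of $Z$ and $Z'$.

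Next I would use the fact, noted in the remark preceding the proposition, that each $\vec{\Gamma}_\mu$ is a finite directed acyclic graph. In a finite DAG, following outgoing edges from any vertex must terminate at some sink after finitely many steps; hence if $\vec{\Gamma}_\mu$ has only one sink, that sink is reachable from every vertex by a directed path. Since outgoing edges go from $\prec$-larger to $\prec$-smaller monomials, the unique sink of $\vec{\Gamma}_\mu$ coincides with the $\prec$-minimum of the fiber $S_\mu$, and every non-sink vertex has at least one outgoing edge.

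With these ingredients in hand, the Gr\"obner basis property follows quickly from the standard leading-term criterion. By Proposition~\ref{p:toricgens}, $T(I)$ is generated by binomials $Y-Y'$ with $Y,Y'$ in a common fiber $S_\mu$, so it suffices to show that the leading term $Y$ (where $Y\succ Y'$) of any such binomial is divisible by the leading monomial of some element of $\mathcal{Q}$. Since $Y$ is not the $\prec$-minimum of $S_\mu$, the hypothesis guarantees that $Y$ is not the unique sink of $\vec{\Gamma}_\mu$, so $Y$ has some outgoing edge to a vertex $Y''\prec Y$; the associated quadric has leading monomial dividing $Y$, as required. Thus $\mathrm{in}_\prec(T(I))\subseteq \mathrm{in}_\prec(\mathcal{Q})$, the reverse inclusion is trivial, and $\mathcal{Q}$ is a quadratic Gr\"obner basis of $T(I)$.

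The only step needing genuine care is the passage from \emph{unique sink} to \emph{global $\prec$-minimum}, which would fail in an arbitrary directed graph but is forced here by the acyclicity that the term order imposes on $\vec{\Gamma}_\mu$; everything else is formal unwinding of Definition~\ref{d:fibergraph} together with the standard characterization of Gr\"obner bases via leading-term ideals.
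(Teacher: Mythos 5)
Your proof is correct, but it takes a genuinely different route from the paper's. The paper argues via Buchberger's criterion: it first uses connectedness of $\Gamma_\mu$ (a consequence of the unique sink) to write any fiber binomial $Z-Z'$ as a telescoping sum of monomial multiples of quadrics, and then reduces each $S$-polynomial to zero by walking both of its terms down directed paths to the common sink $Z^{\ast}$, each step being a reduction by one quadric. You instead verify the leading-term characterization directly: since the $\prec$-minimum of a fiber is always a sink, a unique sink must be that minimum, so every non-minimal vertex of $S_\mu$ has an outgoing edge and is therefore divisible by the leading monomial $Y_{w_1}Y_{w_2}$ of a quadric in $\mathcal{Q}$ (your observation that the cancellation property of monomial orders transfers $Z\succ Z'$ to $Y_{w_1}Y_{w_2}\succ Y_{w_1'}Y_{w_2'}$ is exactly the point that needs checking, and you check it). This is shorter and avoids Buchberger entirely; the one step you gloss over is the inference from ``$T(I)$ is generated by fiber binomials'' (Proposition~\ref{p:toricgens}) to ``$\mathrm{in}_\prec(T(I))$ is generated by leading terms of fiber binomials.'' That inference pattern is invalid for a general generating set --- it is precisely what fails when a generating set is not a Gr\"obner basis. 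What rescues your argument is the stronger, standard fact that $T(I)$ is spanned as a $\kk$-vector space by the fiber binomials: for any $f\in T(I)$, grouping its terms by multidegree shows the coefficients within each fiber sum to zero, so the leading monomial of $f$ is already the leading monomial of some fiber binomial $Y-Y'$. With that one sentence inserted, your argument is complete, and as a byproduct it still yields that $\mathcal{Q}$ generates $T(I)$ (any set contained in an ideal whose leading terms generate the initial ideal is a Gr\"obner basis, hence a generating set), so nothing from the paper's conclusion is lost.
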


\begin{proof}
  First, we show that the quadric binomials generate $T(I)$.  Clearly, $T(I)$ is generated by binomials of the form $Z-Z'$ with $\phi(Z)=\phi(Z')$.  Now suppose that $Z-Z'$ is such a binomial, and let $\mu=\phi(Z)=\phi(Z')$.  Then $\vec{\Gamma}_\mu$ is nonempty and by assumption contains a unique sink.  It follows that $\Gamma_{\mu}$ is connected, so there exists a path from $Z$ to $Z'$,  $Z = Z_0, Z_1, Z_2, \ldots, Z_k = Z'$, with each $Z_{i}$ adjacent to $Z_{i+1}$.

  By construction of $\Gamma_{\mu}$, we may write $Z_{i}=Y_{w_{i,1}}\dots Y_{w_{i,t}}$ and $Z_{i+1}=Y_{w_{i+1,1}}\dots Y_{w_{i+1,t}}$, where $w_{i,1}=\B{s}{r}(w_{i+1,1})$, $w_{i,2}=\RB{r}{s}(w_{i+1,2})$, and $w_{i,j}=w_{i+1,j}$ for all other $j$.  Consequently, $Z_{i}-Z_{i+1}$ is contained in the ideal generated by the quadric binomial $Y_{w_{i,1}}Y_{w_{i,2}} - Y_{w_{i+1,1}}Y_{w_{i+1,2}}$.  In particular, $Z-Z' = \sum\left(Z_{i}-Z_{i+1}\right)$ is contained in the ideal generated by all quadric binomials of $T(I)$.


  Now we verify that the set of quadric binomials satisfies Buchberger's criterion.  Fix two such quadric binomials $Q$ and $Q'$.  Then their $S$-polynomial is the multihomogeneous binomial $Z-Z'$ in some multidegree $\mu$.  It follows that $\vec{\Gamma}_{\mu}$ is nonempty, so by assumption it has a sink $Z^{\ast}$.  Then there is a path from $Z$ to $Z^{\ast}$, $Z = Z_0, Z_1, Z_2, \ldots, Z_k = Z^{\ast}$, with each $Z_{i}$ adjacent to $Z_{i+1}$ and $Z_{i}\succ Z_{i+1}$.

  Following the argument above, write \[Z_{i}-Z_{i+1} = (Y_{w_{i,1}}Y_{w_{i,2}} - Y_{w_{i+1,1}}Y_{w_{i+1,2}})Y_{w_{i,3}}\dots Y_{w_{i,t}},\] and observe that the leading term of the quadric binomial $Y_{w_{i,1}}Y_{w_{i,2}} - Y_{w_{i+1,1}}Y_{w_{i+1,2}}$ is $Y_{w_{i,1}}Y_{w_{i,2}}$, which divides $Z_{i}$.  Thus, any polynomial containing $Z_{i}$ may be reduced by the quadric binomial $Y_{w_{i,1}}Y_{w_{i,2}} - Y_{w_{i+1,1}}Y_{w_{i+1,2}}$, and the result replaces $Z_{i}$ with $Z_{i+1}$.  Inductively, our $S$-polynomial $Z-Z'$ reduces, modulo the quadric binomials, to $Z'-Z^{\ast}$.  By a similar argument, it then reduces to $Z^{\ast}-Z^{\ast}=0$.  We conclude that the quadric binomials satisfy Buchberger's criterion and so form a Gr\"obner basis for $T(I)$.
\end{proof}

\begin{remark}\label{r:NoetherianReduction}
Proposition~\ref{prop:GQuadratic} may also be proved using \textit{coherent marking} and \textit{Noetherian reduction relations} (see~\cite[Chapter~3]{S96}).
\end{remark}

\section{Two-Borel Ideals}\label{s:toricproof}

In this section, we construct a monomial order on $S_{I}$ under which all the (nonempty) directed fiber graphs $\vec{\Gamma}_{\mu}$ have a unique sink.  It then follows from Proposition \ref{prop:GQuadratic} that the toric ideal $T(I)$ has a quadric Gr\"obner basis with respect to this order and in particular is Koszul.  

We begin with a technical lemma, which will allow us to to assume that the Borel generators of $I$ always divide the multidegree $\mu$ when we study the graph $\vec{\Gamma}_{\mu}$.

\begin{lem}\label{lem:borel2}
Let $\mu$ be a multi-degree, $M$ a monomial of degree $d$, $M'$ the lex-latest degree $d$ monomial in $\borel(M)$ such that $M' \text{ divides }\mu$, and $m$ another monomial in $\borel(M)$ so that $m \text{ divides } \mu$.  Then $m\in\borel(M')$.  Furthermore, if $m\neq M'$ and $j$ is the largest index so that $\sigma_j(m)<\sigma_j(M')$, then there is some index $i<j$ so that $\RB{i}{j}(m)\in\borel(M')$ and $\RB{i}{j}(m) \text{ divides }\mu$.
\end{lem}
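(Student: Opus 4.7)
The plan is to translate every condition in the lemma into a statement about the cumulative exponent vector $\sigma$ and then exploit the interaction between $\sigma$, the Borel order, and divisibility by $\mu$. Recall $a_i(m) = \sigma_i(m) - \sigma_{i+1}(m)$ (with the convention $\sigma_{n+1}=0$), so $m\mid\mu$ is equivalent to the sequence $\tau^m_i := \sigma_i(\mu)-\sigma_i(m)$ being weakly decreasing in $i$, while $m\in\borel(M)$ is equivalent to $\sigma_i(m)\le\sigma_i(M)$ for all $i$ by Proposition \ref{p:borelsigma}.

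For the first assertion, I would construct the componentwise ``Borel join'' $M''$ with $\sigma_i(M'') := \max(\sigma_i(m), \sigma_i(M'))$. This is a well-defined degree-$d$ monomial since the pointwise max of two weakly decreasing sequences starting at $d$ is again weakly decreasing and starts at $d$. Then $M''\in\borel(M)$ (immediate from the analogous bounds on $m$ and $M'$), and $M''\mid\mu$ because $\tau^{M''}_i = \min(\tau^m_i, \tau^{M'}_i)$ is weakly decreasing as a pointwise minimum of weakly decreasing sequences. To compare $M''$ with $M'$ in lex, let $k$ be the first index where their $\sigma$-vectors differ; then $k\ge 2$ (since degrees match), $\sigma_{k-1}(M'')=\sigma_{k-1}(M')$ and $\sigma_k(M'')>\sigma_k(M')$, which yields $a_{k-1}(M'')<a_{k-1}(M')$ while $a_i(M'')=a_i(M')$ for $i<k-1$. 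Hence $M''$ is lex-smaller, i.e.\ ``lex-later'' in the paper's convention, than $M'$. The lex-maximality of $M'$ then forces $M''=M'$, so $\sigma(m)\le\sigma(M')$ componentwise and therefore $m\in\borel(M')$.

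For the second assertion, apply Lemma \ref{lem:borel1} to the pair $m\in\borel(M')$ at the index $j$ to obtain some $i<j$ with $\RB{i}{j}(m)\in\borel(M')$. What remains is to promote this conclusion by verifying $\RB{i}{j}(m)\mid\mu$. The move $\RB{i}{j}$ raises $a_j$ by one and lowers $a_i$ by one, so the only way divisibility could fail is if $a_j(m)+1>a_j(\mu)$, which via $a_j=\sigma_j-\sigma_{j+1}$ is the inequality $\sigma_j(\mu)-\sigma_j(m)>\sigma_{j+1}(\mu)-\sigma_{j+1}(m)$. By the maximality of $j$ together with $m\in\borel(M')$, we have $\sigma_{j+1}(m)=\sigma_{j+1}(M')$ (when $j<n$; the case $j=n$ uses $\sigma_{n+1}=0$). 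Chaining with $\sigma_j(m)<\sigma_j(M')$ and the weak decrease of $\tau^{M'}$ arising from $M'\mid\mu$,
\[
\sigma_j(\mu)-\sigma_j(m) \;>\; \sigma_j(\mu)-\sigma_j(M') \;\ge\; \sigma_{j+1}(\mu)-\sigma_{j+1}(M') \;=\; \sigma_{j+1}(\mu)-\sigma_{j+1}(m),
\]
finishing the divisibility check.

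The main obstacle is the first assertion: one must recognize that ``$M'$ lex-latest'' is secretly the statement that $M'$ is the Borel-maximal element among divisors of $\mu$ in $\borel(M)$, and then exhibit this maximum as an explicit join in $\sigma$-coordinates. The $\max$-in-$\sigma$ construction is essentially forced once Borel-membership and divisibility have both been encoded in $\sigma$. With Part~1 in hand, Part~2 is a short strengthening of Lemma~\ref{lem:borel1} that exploits the maximality of the index $j$ to secure the extra divisibility condition.
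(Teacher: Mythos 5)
Your proof is correct, and the first (main) assertion is handled by a genuinely different device than the paper's. The paper argues by contradiction: assuming $m\notin\borel(M')$, it takes the largest $j$ with $\sigma_j(m)>\sigma_j(M')$, invokes Lemma~\ref{lem:borel1} to produce a single reverse Borel move $M''=\RB{i}{j}(M')$ that stays in $\borel(M)$, and then checks by a local exponent comparison ($b_j>a_j$) that $M''$ still divides $\mu$ and is lex-later than $M'$, contradicting the choice of $M'$. You instead build the componentwise join $\sigma_i(M'')=\max(\sigma_i(m),\sigma_i(M'))$ and verify in one stroke that it is a degree-$d$ monomial of $\borel(M)$ dividing $\mu$; your reformulation of divisibility as weak monotonicity of $\tau^m_i=\sigma_i(\mu)-\sigma_i(m)$ is exactly what makes the join close under divisibility, and the lex comparison forces $M''=M'$. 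Your route buys a cleaner structural fact -- the degree-$d$ divisors of $\mu$ in $\borel(M)$ form a join-semilattice in $\sigma$-coordinates with $M'$ at the top -- at the cost of introducing the join construction, whereas the paper's single-move contradiction stays entirely within the already-proved Lemma~\ref{lem:borel1}. For the second assertion your argument is essentially the paper's: apply Lemma~\ref{lem:borel1} at the maximal index $j$ and use maximality of $j$ (so $\sigma_{j+1}(m)=\sigma_{j+1}(M')$) together with $M'\mid\mu$ to get $a_j(\mu)>a_j(m)$, hence divisibility of $\RB{i}{j}(m)$; your $\tau$-chain is just a repackaging of the paper's ``same reasoning as above.'' One cosmetic caution: you write ``lex-maximality of $M'$'' where, in the paper's convention, lex-latest means lex-smallest in the usual order, so be careful that the contradiction is with $M''$ being lex-later (lex-smaller) than the lex-latest element -- your inequalities do point the right way.
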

\begin{proof}
  For the first statement, suppose to the contrary that $m\notin \borel(M')$.  Let $j$ be the largest index so that $\sigma_j(m)>\sigma_j(M')$ (by Lemma~\ref{lem:borel1} there is at least one such index).  We have $\sigma_j(M)\ge\sigma_j(m)>\sigma_j(M')$, so again by Lemma~\ref{lem:borel1}, there is some index $i<j$ so that $M''=\frac{x_j}{x_i}M'\in\borel(M)$.

  We claim $M''\text{ divides } \mu$.  Indeed, write $M'=x_1^{a_1}\cdots x_n^{a_n}$; we need only show that $x_{j}^{a_{j}+1}$ divides $\mu$.  To this end, write $m=x_1^{b_1}\cdots x_n^{b_n}$; since $m$ divides $\mu$, it is enough to show that $b_{j}> a_{j}$.  But by the construction of $j$ we have $b_{j} + \sigma_{j+1}(m) = \sigma_{j}(m) > \sigma_{j}(M') = a_{j} + \sigma_{j+1}(M')$ (taking $\sigma_{j+1}=0$ if $j=n$).  Also by the choice of $j$, we have $\sigma_{j+1}(m)\leq \sigma_{j+1}(M')$.  It follows that $b_{j}>a_{j}$ as desired.

  Now we have that $M''=\RB{i}{j}(M')$, which is lex-later than $M'$, is contained in $\Borel(M)$ and divides $\mu$.  But this contradicts the choice of $M'$ as the lex-last monomial in $\Borel(M)$ dividing $\mu$.  Thus $m\in\borel(M')$ as desired.


For the second claim, let $j$ be the largest index with $\sigma_{j}(m)>\sigma_{j}(M')$.  Then by Lemma \ref{lem:borel1}, there is some $i$ with $\RB{j}{i}(m)\in \Borel(M')$.  By the same reasoning as above, we conclude that $\RB{j}{i}(m)$ divides $\mu$.

\end{proof}


\begin{cor}\label{cor:GraphReduction}
Suppose $I=\Borel(M_1,M_2,\ldots,M_k)$ and $\mu$ is a multi-degree.  Let $M'_1,\ldots,M'_k$ be the lex-last monomials of $\borel(M_1),\ldots,\borel(M_k)$, respectively, that divide the multidegree $\mu$, and set $I'=\borel(M'_1,\ldots,M'_k)$.  Then $\vec{\Gamma}_\mu(I)=\vec{\Gamma}_\mu(I')$.
\end{cor}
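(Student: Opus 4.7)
The plan is to show that $I$ and $I'$ have identical sets of minimal monomial generators among the divisors of $\mu$; once this is established, the vertex sets of $\vec{\Gamma}_\mu(I)$ and $\vec{\Gamma}_\mu(I')$ coincide, and because the edges and orientations are determined intrinsically from the vertices and the monomial order $\prec$, the two directed graphs are literally equal.

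For the forward inclusion, suppose $w$ is a minimal generator of $I$ dividing $\mu$. Then $w \in \Borel(M_i)$ for some $i$, so Lemma~\ref{lem:borel2} places $w$ inside $\Borel(M'_i) \subseteq I'$. Since $I' \subseteq I$ and $w$ has no proper monomial divisor in $I$, it has none in $I'$ either, so $w$ is a minimal generator of $I'$ as well.

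The converse inclusion is where I expect the main difficulty to lie. Suppose $w$ is a minimal generator of $I'$ dividing $\mu$; I want to rule out a proper monomial divisor $v$ of $w$ belonging to $I$. Using $I = \Borel(M_1) + \cdots + \Borel(M_k)$ (the immediate $k$-generator analog of Proposition~\ref{p:twoborelissum}), such a $v$ would be divisible by some $\tilde v \in \Borel(M_j)$ for some $j$. Since $\tilde v$ divides $w$, which divides $\mu$, Lemma~\ref{lem:borel2} applies to give $\tilde v \in \Borel(M'_j) \subseteq I'$, whence $v \in I'$, contradicting the minimality of $w$ in $I'$. The crux is reapplying Lemma~\ref{lem:borel2} to the possibly-smaller monomial $\tilde v$ rather than directly to $v$; this is what prevents a ``phantom'' divisor in $I\smallsetminus I'$ from breaking the minimality argument.

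To finish, observe that the vertex set of each fiber graph consists of products $Y_{w_1}\cdots Y_{w_t}$ with each $w_s$ a minimal generator of the corresponding ideal and $\prod w_s = \mu$; in particular, each $w_s$ divides $\mu$. The equality of generating sets among divisors of $\mu$ identifies the two vertex sets, and the Borel/reverse-Borel edge condition together with the monomial order $\prec$ are the same data on both sides, yielding $\vec{\Gamma}_\mu(I) = \vec{\Gamma}_\mu(I')$.
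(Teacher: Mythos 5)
Your proposal is correct and follows the route the paper intends: the corollary is stated without proof precisely because it is immediate from Lemma~\ref{lem:borel2} applied to each factor $w_s$ of a vertex $Y_{w_1}\cdots Y_{w_t}$ (each such $w_s$ divides $\mu$), exactly as in your forward inclusion. Your converse inclusion is valid but more elaborate than needed: since all ideals here are equigenerated in degree $d$, a degree-$d$ monomial of $I'\subseteq I$ is automatically a minimal generator of $I$, so no proper-divisor analysis is required.
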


For the duration of the paper, let $I=\Borel(M,N)$ be a two-Borel ideal equigenerated in degree $d$, and recall that the toric ideal $T(I)$ is an ideal of the ring $S_{I}=\kk[Y_w: w\in\mbox{gens}(I)]$.  The structure of the directed graphs $\vec{\Gamma}_{\mu}$ depends on the choice of term order for the ring $S_{I}$, so our first order of business is to define one that allows our arguments to work.  


\begin{definition}\label{d:fibersinkorder}\label{conv:MonomialOrderConvention}
  We define the \emph{fiber sink order} for $S_{I}$ as follows.  First, partition the minimal monomial generators of $I$ into two sets, $G_{M}=\gens(\Borel(M))$ and $G_{N}=\gens(I)\smallsetminus G_{M}$. (Note that $G_M$ is closed under going down in the Borel order, while $G_N$ is not; see, e.g., Figure~\ref{fig:MonomialOrderConvention}.) Then order the variables $Y_{w}$ of $S_{I}$ according to the \emph{fiber sink variable order}, defined as follows.
\begin{itemize}
\item If $u,v\in G_N$ and $u$ precedes $v$ in the lex order, then $Y_{v}$ precedes $Y_{u}$.
\item If $u\in G_N, v\in G_M$, then $Y_v$ precedes $Y_u$
\item If $u,v\in I_M$ and $u$ precedes $v$ in the lex order, then $Y_{u}$ precedes $Y_{v}$.
\end{itemize}
  Finally, the fiber sink order on $S_{I}$ is the reverse lex order induced by the fiber sink variable order.
\end{definition}

\begin{example}
  Let $M=a^{2}c^{3}$, $N=b^{4}c$, and $I=\Borel(M,N)$.  The graph on the left in Figure \ref{fig:MonomialOrderConvention} is the Hasse diagram of the Borel order on the minimal monomial generators of $I$.  The elements of $G_{N}$ are in the red circles, and the elements of $G_{M}$ are in green boxes.  The labels $Y_{0}, \dots, Y_{13}$ are the fiber sink variable order on these generators, with $Y_{0}=N$ first and $Y_{13}=M$ last.  Finally, the graph on the right is $\vec{\Gamma}_{\mu}$ for the multidegree $\mu=a^{3}b^{9}c^{3}$, using the fiber sink order.
\end{example}

\begin{figure}[hbt]
\captionsetup[subfigure]{labelformat=empty}
\centering

\begin{subfigure}[c]{.49\textwidth}
\begin{tikzpicture}
\tikzstyle{every node}=[]
\node[fill=red!50, circle, inner sep=0 pt, minimum size=20 pt] (0) at (3,0) {$b^4c$};
\node[fill=red!50, circle, inner sep=0 pt, minimum size=20 pt] (1) at (4,-1) {$b^5$};
\node[fill=red!50, circle, inner sep=0 pt, minimum size=20 pt] (2) at (2,-1) {$ab^3c$};
\node[fill=red!50, circle, inner sep=0 pt, minimum size=20 pt] (3) at (3,-2) {$ab^4$};
\node[fill=green,rectangle,inner sep=3 pt] (4) at (1,-6) {$a^5$};
\node[fill=green,rectangle,inner sep=3 pt] (5) at (1,-5) {$a^4b$};
\node[fill=green,rectangle,inner sep=3 pt] (6) at (0,-4) {$a^4c$};
\node[fill=green,rectangle,inner sep=3 pt] (7) at (2,-4) {$a^3b^2$};
\node[fill=green,rectangle,inner sep=3 pt] (8) at (0,-3) {$a^3bc$};
\node[fill=green,rectangle,inner sep=3 pt] (9) at (-1,-2) {$a^3c^2$};
\node[fill=green,rectangle,inner sep=3 pt] (10) at (2,-3) {$a^2b^3$};
\node[fill=green,rectangle,inner sep=3 pt] (11) at (1,-2) {$a^2b^2c$};
\node[fill=green,rectangle,inner sep=3 pt] (12) at (0,-1) {$a^2bc^2$};
\node[fill=green,rectangle,inner sep=3 pt] (13) at (0,0) {$a^2c^3$};
\node[right] at (0.east) {$Y_0$};
\node[right] at (1.east) {$Y_1$};
\node[right] at (2.east) {$Y_2$};
\node[right] at (3.east) {$Y_3$};
\node[right] at (4.east) {$Y_4$};
\node[right] at (5.east) {$Y_5$};
\node[right] at (6.east) {$Y_6$};
\node[right] at (7.east) {$Y_7$};
\node[right] at (8.east) {$Y_8$};
\node[right] at (9.east) {$Y_9$};
\node[right] at (10.east) {$Y_{10}$};
\node[right] at (11.east) {$Y_{11}$};
\node[right] at (12.east) {$Y_{12}$};
\node[right] at (13.east) {$Y_{13}$};

\node at (0,.5) {$M$};
\node at (3,.5) {$N$};

\draw (0)--(1)--(3)--(2)--(0);
\draw (3)--(10)--(11)--(2);
\draw (10)--(7)--(5)--(4);
\draw (13)--(12)--(9)--(8)--(6)--(5);
\draw (12)--(11)--(8);
\end{tikzpicture}
\caption{Hasse diagram for generators of $I$}
\end{subfigure}
\begin{subfigure}[c]{.49\textwidth}
%

\begin{tikzpicture}[scale=1,->,>=stealth]
\node (0) at (0,0) {$Y_0^2Y_8$};
\node (1) at (0,-1) {$Y_0Y_1Y_9$};
\node (2) at (-2,-2) {$Y_0Y_2Y_{11}$};
\node (3) at (2,-2) {$Y_0Y_3Y_{12}$};
\node (4) at (-2,-3) {$Y_2^3$};
\node (5) at (0,-3) {$Y_1Y_2Y_{12}$};
\node (6) at (2,-3) {$Y_1Y_3Y_{13}$};

\draw (0) to [bend right] (2);
\draw (2)--(5);
\draw (0) to [bend left] (3);
\draw (0)--(1);
\draw (1)--(3);
\draw (1)--(5);
\draw (4)--(2);
\draw (5)--(6);
\draw (5)--(3);
\draw (3)--(6);
\draw (1)--(6);
\draw (2)--(3);
\end{tikzpicture}

\caption{The graph $\vec{\Gamma}_\mu$ for $\mu=a^3b^9c^3$}
\end{subfigure}

\caption{The fiber sink order on $S_{I}$, with $I=\Borel(a^{2}c^{3},b^{4}c)$.}\label{fig:MonomialOrderConvention}
\end{figure}

\begin{remark}
  The fiber sink variable order always begins with $Y_{0}=N$, continues with the other elements of $G_{N}$ in antilex order, then takes the elements of $G_{M}$ in lex order, ending with $M$.  Heuristically, paths from $N$ to $M$ in the Hasse diagram of the Borel order represent chains in the fiber sink variable order.

  In fact, our arguments do not rely on the use of the lex order within $G_{N}$ and $G_{M}$.  Any linear extension of the antiborel order on $G_{N}$ followed by any linear extension of the Borel order on $G_{M}$ will yield the same unique sinks in every $\vec{\Gamma}_{\mu}$.
\end{remark}

\begin{remark}
  While we think of $M$ and $N$ as being incomparable in the Borel order, this assumption is not actually necessary.  In the degenerate case where $M$ and $N$ are comparable, then $I=\Borel(M)$ or $I=\Borel(N)$ is a principal Borel ideal, and consequently many orders on the toric ideal of a principal Borel ideal can occur as the fiber sink order.  For example, setting $M=a^{d}$ yields the antilex order on the generators of $I$, and setting $N=a^{d}$ yields the lex order.
\end{remark}

\begin{remark}
We could have dualized everything in the creation of the fiber sink order.  If we started with the antilex order on $G_{M}$ and ended with the lex order on $G_{N}$, and then induced the lex instead of the reverse lex order on the $Y$ variables, our graphs would end up having unique sources instead of sinks.  But apart from the change in language all our subsequent arguments would go through without modification.
\end{remark}

Now we study the sinks in the graphs $\vec{\Gamma}_{\mu}$.  Observe from Figure \ref{fig:MonomialOrderConvention} that movement along the directed edges always consists of replacing a low-indexed variable with a lower index, and replacing a high-indexed variable with a higher index.  Thus there are two possible heuristics, depending on which of these we view as a goal, and which we view as incidental.


\begin{defn}\label{defn:ReplacementTypes}
  Write $Y=
  Y_{w_{1}}\dots Y_{w_{k}}$,
  with $Y_{w_{1}}$ first and $Y_{w_{k}}$ last in the fiber sink variable order.  
  We say $Y$ has \textit{type $N$} if every $w_i\in G_N$.  We say $Y$ has \textit{type $M$} if $w_k\in G_M$.    (So every $Y$ has type $M$ or $N$, but not both.)  If $Y$ has type $N$ then a \textit{type $N$ replacement} of $Y$ is the monomial resulting from applying a reverse Borel move to $Y_{w_{1}}$ and the corresponding Borel move to one of the other variables.  (That is, we replace $Y_{w_{1}}$ with $Y_{\RB{i}{j}(w_{1})}$ and replace some other $Y_{w_{\ell}}$ with $Y_{\B{j}{i}(w_{\ell})}$, while leaving any other factors untouched.)
  Similarly, if  $Y$ has type $M$ then a \textit{type $M$ replacement} of $Y$ is the monomial resulting from applying a reverse Borel move to $Y_{w_{k}}$ and the corresponding Borel move to one of the other variables.
\end{defn}


The following lemma motivates the choice of reverse lex order on the $Y$ variables in the fiber sink order.

\begin{lem}\label{lem:whygrevlex}
  Fix a multidegree $\mu$ divisible by both $M$ and $N$, and suppose $Y\in S_{\mu}$.  If $Y$ has type $N$ and $Y_{N}$ does not divide $Y$, then a type $N$ replacement is possible.  If $Y$ has type $M$ and $Y_{M}$ does not divide $Y$, then a type $M$ replacement is possible.  In either case, the replacement is later than $Y$ in the fiber sink order.
\end{lem}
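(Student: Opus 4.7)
The plan is to construct the replacement by applying Lemma~\ref{lem:borel2} with the appropriate Borel generator, and then verify that the replacement is later than $Y$ in $\prec$ by tracking how the swap shifts the indices of the $Y$-variables in the fiber sink variable order.

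For the type $N$ case, $Y_N \nmid Y$ forces the variable-largest factor $w_1$ to lie in $G_N \setminus \{N\}$. Since $N \mid \mu$ and $N$ is lex-smallest in $\Borel(N)$, it is itself the lex-latest element of $\Borel(N)$ dividing $\mu$, so Lemma~\ref{lem:borel2} (with $m = w_1$ and Borel generator $N$) produces indices $i < j$ such that $w_1' := \RB{i}{j}(w_1) \in \Borel(N)$ and $w_1' \mid \mu$. The divisibility $w_1' \mid \mu$ forces the exponent of $x_j$ in $\mu$ to strictly exceed its exponent in $w_1$, so some factor $w_\ell$ with $\ell \neq 1$ is divisible by $x_j$ and admits the paired Borel move $w_\ell' := \B{j}{i}(w_\ell) \in \Borel(N) \subseteq I$. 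The resulting $Y' := Y \cdot Y_{w_1'} Y_{w_\ell'} / (Y_{w_1} Y_{w_\ell})$ is then a valid type $N$ replacement. The type $M$ case is handled symmetrically by applying Lemma~\ref{lem:borel2} with Borel generator $M$ to $m = w_k \neq M$.

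To compare $Y'$ with $Y$, I would combine three features of the fiber sink variable order: every $G_N$-variable has smaller index than every $G_M$-variable; within $G_N$ (antilex) lex-smaller corresponds to smaller index; and within $G_M$ (lex) lex-smaller corresponds to larger index. In the type $N$ case, the Borel move $w_\ell \mapsto w_\ell'$ is lex-later, so $Y_{w_\ell'}$'s index strictly exceeds $Y_{w_\ell}$'s whether $w_\ell'$ stays in $G_N$ or crosses into $G_M$, while $Y_{w_1'}$ has smaller index than $Y_{w_1}$ exactly when $w_1' \in G_N$. In every subcase, the largest index among $\{Y_{w_1}, Y_{w_\ell}, Y_{w_1'}, Y_{w_\ell'}\}$ belongs to $\{Y_{w_1'}, Y_{w_\ell'}\}$, and a brief lex check (using $w_1$'s lex-minimality among the factors of $Y$) rules out the coincidences $w_\ell' = w_1$ or $w_1' = w_\ell$ that could cancel this contribution; hence this variable appears in $Y'/Y$ with strictly positive net exponent, and the reverse-lex convention gives $Y' \prec Y$. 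The type $M$ case is even more direct: $w_k' \in G_M$ is lex-smaller than $w_k$, so $Y_{w_k'}$ already has strictly larger index than the already-largest-indexed factor $Y_{w_k}$, and the same reasoning applies. I expect the main obstacle to be precisely this index bookkeeping, where the antilex-on-$G_N$, lex-on-$G_M$, and reverse-lex-on-$Y$ conventions must all be kept straight simultaneously.
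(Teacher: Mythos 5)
Your proposal is correct and follows essentially the same route as the paper's proof: apply Lemma~\ref{lem:borel2} to the extremal factor (with $N$, resp.\ $M$, serving as the lex-latest element of its principal Borel ideal dividing $\mu$), use the divisibility $\RB{i}{j}(w)\mid\mu$ to locate a partner factor divisible by $x_j$, and then compare $Y'$ with $Y$ in reverse lex by tracking where the four affected variables sit in the fiber sink variable order. Two small remarks: $w_1$ is the variable-\emph{smallest} (first) factor rather than the variable-largest, and in the paper's conventions a Borel move yields a lex-\emph{earlier} monomial (your index conclusions are nonetheless the right ones); moreover, your extra bookkeeping in the type $N$ case --- checking that the largest-index changed variable lies in $\{Y_{w_1'},Y_{w_\ell'}\}$ and ruling out the cancellations $w_\ell'=w_1$ and $w_1'=w_\ell$ --- is genuinely needed there, since $Y_{w_1'}$ can move to a smaller index and the paper's assertion that type $N$ is ``identical'' to type $M$ quietly elides exactly these checks.
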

\begin{proof}
  We suppose $Y$ has type $M$ but is not divisible by $Y_{M}$, and prove the conclusions of the lemma.  (The argument for type $N$ is identical.)

  Write $Y=Y_{w_{1}}\dots Y_{w_{k}}$ with $Y_{w_{k}}$ last in the fiber sink variable order.  By assumption, $w_{k}\neq M$, but $w_{k}\in\Borel(M)$, so by Lemma \ref{lem:borel2} there exists a reverse Borel move $\RB{i}{j}$ such that $=\RB{i}{j}(w_{k})$ is contained in $\Borel(M)$ and divides $\mu$.  Write $w_{k}=\prod x_{\ell}^{a_{\ell}}$; it follows that $x_{j}^{a_{j}+1}$ divides $\mu$ and in particular $x_{j}$ divides one of the other $w_{r}$.  Now we can perform the Borel move $\B{j}{i}$ on $w_{r}$, and the result will be contained in $I$.  So the monomial
  \[Y'=Y_{\RB{i}{j}(w_{k})} Y_{\B{j}{i}(w_{r})} \prod_{s\neq r,k}Y_{w_{s}}\]
is a type $M$ replacement of $Y$ and is in the fiber $S_{\mu}$.  In particular, this replacement is possible.  Finally, observe that $Y'$ is divisible by $Y_{\RB{i}{j}(w_{k})}$, which comes after $Y_{w_{k}}$ in the fiber sink variable order.  Since $Y_{w_{k}}$ is the last variable dividing $Y$ and the fiber sink order is reverse lex, we conclude that $Y'$ comes after $Y$ in the fiber sink order.
\end{proof}



\begin{corollary}\label{c:describesinks}
Suppose $I=\Borel(M,N)$ and fix a multidegree $\mu$ divisible by both $M$ and $N$.  Then every sink of type $M$ in $\vec{\Gamma}_{\mu}$ is divisible by $Y_{M}$, and every sink of type $N$ is divisible by $Y_{N}$.  Furthermore, every sink in $\vec{\Gamma}_{\mu}$ has the form $Y_{M}Z_{M}$, where $Z_{M}$ is a sink in $\vec{\Gamma}_{\frac{\mu}{M}}$, or $Y_{N}Z_{N}$, where $Z_{N}$ is a sink in $\vec{\Gamma}_{\frac{\mu}{N}}$.
\end{corollary}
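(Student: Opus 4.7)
The plan is to derive both conclusions from Lemma~\ref{lem:whygrevlex}, using the compatibility of the fiber sink order with multiplication by a common $Y$-variable. I expect this to be a short argument with no real obstacle; the work has already been done in Lemma~\ref{lem:whygrevlex}.

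First I would establish the divisibility claims by contrapositive. Suppose $Y$ is a sink of $\vec{\Gamma}_\mu$ of type $M$, but $Y_M$ does not divide $Y$. Lemma~\ref{lem:whygrevlex} (applied to $Y$ of type $M$, using that $\mu$ is divisible by both $M$ and $N$) then produces a type $M$ replacement $Y'$ that lies in $S_\mu$ and comes strictly later than $Y$ in the fiber sink order. By the definition of the fiber digraph, $Y \to Y'$ is a directed edge of $\vec{\Gamma}_\mu$, contradicting the assumption that $Y$ is a sink. Hence every sink of type $M$ is divisible by $Y_M$, and an identical argument handles sinks of type $N$.

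For the ``furthermore'' part I would then factor an arbitrary sink $Y$ as $Y_M Z_M$ (if it has type $M$) or $Y_N Z_N$ (if it has type $N$), using the divisibility just established. Since $\phi$ is multiplicative, $\phi(Z_M) = \mu/M$, so $Z_M$ is indeed a vertex of $\vec{\Gamma}_{\mu/M}$, and similarly for $Z_N$. It remains to check that $Z_M$ is a sink of $\vec{\Gamma}_{\mu/M}$. This will follow by lifting: if $Z_M \to Z_M'$ were a directed edge in $\vec{\Gamma}_{\mu/M}$, say realized by a Borel/reverse-Borel pair applied to two factors of $Z_M$, then the same pair of moves can be performed on the same two factors of $Y_M Z_M$, producing an edge between $Y_M Z_M$ and $Y_M Z_M'$ in $\vec{\Gamma}_\mu$. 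Because the fiber sink order is a reverse lex term order, multiplication by the common factor $Y_M$ preserves the relative order, so $Y_M Z_M \to Y_M Z_M'$ is directed the same way as $Z_M \to Z_M'$. This contradicts $Y_M Z_M$ being a sink, so $Z_M$ must itself be a sink in $\vec{\Gamma}_{\mu/M}$; the type $N$ case is symmetric.

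The only place one might worry is whether ``type'' is preserved under this restriction: by Definition~\ref{defn:ReplacementTypes}, the type of a monomial is detected by its lex-last $Y$-variable, and multiplying by $Y_M$ (respectively $Y_N$) can only reinforce the type on the lifted side, so the factor $Z_M$ (resp.\ $Z_N$) inherits a well-defined status in $\vec{\Gamma}_{\mu/M}$ (resp.\ $\vec{\Gamma}_{\mu/N}$) that is entirely sufficient for the lifting argument above.
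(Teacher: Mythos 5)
Your proof is correct and follows essentially the same route as the paper: the divisibility claims are exactly the ``immediate from Lemma~\ref{lem:whygrevlex}'' step, and your edge-lifting argument is a spelled-out version of the paper's observation that the subgraph of $\vec{\Gamma}_{\mu}$ on vertices divisible by $Y_{M}$ is isomorphic to $\vec{\Gamma}_{\frac{\mu}{M}}$. (Your closing worry about types is unnecessary, since the ``furthermore'' claim and the lifting argument never reference the type of $Z_{M}$.)
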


\begin{proof}
  The first claim is immediate from Lemma \ref{lem:whygrevlex}.  For the second claim, observe that the subgraph of $\vec{\Gamma}_{\mu}$ containing $Y_{M}$ is isomorphic to $\vec{\Gamma}_{\frac{\mu}{M}}$.
\end{proof}

Corollary \ref{c:describesinks} suggests an algorithm to find the sinks of $\vec{\Gamma}_{\mu}$:  If there are monomials of type $M$, find the sinks of $\vec{\Gamma}_{\frac{\mu}{M}}$ (after replacing $M$ and $N$ by $M'$ and $N'$ as in Corollary \ref{cor:GraphReduction} if necessary),
and multiply by $Y_{M}$.  If there are monomials of type $N$, multiply the sinks of $\vec{\Gamma}_{\frac{\mu}{N}}$ by $Y_{N}$.

To prove that every $\vec{\Gamma}_{\mu}$ has a unique sink, it suffices to remove the choice between types $M$ and $N$.

\begin{proposition}\label{p:uniquesink}\label{thm:AllN}
  Suppose $I=\Borel(M,N)$ and fix a multidegree $\mu$ divisible by both $M$ and $N$.  Then either every sink in $\vec{\Gamma}_{\mu}$ has type $M$, or every sink has type $N$.  
\end{proposition}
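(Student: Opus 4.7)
The plan is to proceed by strong induction on $k = \deg(\mu)/d$, the factor count in monomials of $\vec{\Gamma}_\mu$, with the inductive hypothesis quantifying over all two-Borel ideals. The base case $k = 1$ is vacuous, since two distinct degree-$d$ monomials cannot both divide a degree-$d$ monomial.

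For the inductive step, suppose for contradiction that $\vec{\Gamma}_\mu(I)$ contains both a type $M$ sink $A$ and a type $N$ sink $B$. By Corollary~\ref{c:describesinks}, these take the form $A = Y_M Z_A$ and $B = Y_N Z_B$, with $Z_A$ a sink of $\vec{\Gamma}_{\mu/M}(I)$, $Z_B$ a sink of $\vec{\Gamma}_{\mu/N}(I)$, and every factor of $Z_B$ lying in $G_N$. I would first apply Corollary~\ref{cor:GraphReduction} to each subfiber, replacing $I$ by a two-Borel ideal whose Borel generators are the lex-latest translates of $M$ and $N$ dividing $\mu/M$ and $\mu/N$, respectively. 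Provided both of these translates actually divide the corresponding multidegree, the inductive hypothesis applies and determines the type of every sink in each reduced fiber.

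The critical step is then to use the witness provided by $A$ to manufacture an outgoing edge from $B$, contradicting its being a sink. The idea is that because $Z_A$ factors $\mu/M$ using at least one $G_M$-element, $\mu$ carries enough ``$M$-mass'' that some factor $u$ of $B$ admits a reverse Borel move $\RB{i}{j}(u)$ landing inside $\Borel(M)$, while some distinct factor $u'$ of $B$ satisfies $x_j \mid u'$. Performing $\RB{i}{j}$ on $u$ and the companion $\B{j}{i}$ on $u'$ then produces a new fiber monomial $B'$ with a $G_M$-variable. Since every $G_M$ variable follows every $G_N$ variable in the fiber sink variable order and the fiber sink order is reverse lex, $B'$ is strictly later than $B$, contradicting that $B$ is a sink.

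The main obstacle will be actually producing this asymmetric swap. Its existence relies on the Borel-incomparability of $M$ and $N$: Proposition~\ref{p:borelsigma} guarantees indices where $\sigma(M)$ and $\sigma(N)$ strictly exceed each other in opposite directions, and Lemma~\ref{l:sigmaformula} then lets one verify that $\RB{i}{j}(u) \in \Borel(M)$. Finding $u$ together with a compatible $u'$ (having $x_j \mid u'$ and distinct from $u$) and simultaneously arranging $x_i \mid u$ with $\RB{i}{j}(u) \in \Borel(M)$ is the combinatorial heart of the argument, and must be extracted from the types of $Z_A$ and $Z_B$ provided by the inductive hypothesis. Further care is required in the degenerate subcases of Corollary~\ref{cor:GraphReduction} where the lex-latest translate of $M$ or $N$ fails to divide $\mu/M$ or $\mu/N$; in those subcases one argues directly that the reduced subfiber contains only monomials of a single type, so the corresponding sink type is forced.
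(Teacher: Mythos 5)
There is a genuine gap, and it sits exactly where you acknowledge the ``combinatorial heart'' still needs to be extracted: that step is the entire content of the proposition, and the mechanism you propose for it cannot work as stated. You want to take the type $N$ sink $B$, whose factors all lie in $G_N$, and find a factor $u$ admitting a reverse Borel move $\RB{i}{j}(u)$ ``landing inside $\Borel(M)$.'' This is impossible: by Lemma \ref{l:sigmaformula} a reverse Borel move weakly increases every entry of the cumulative exponent vector, and by Proposition \ref{p:borelsigma} membership in $\Borel(M)$ is the condition $\sigma_k(\cdot)\le\sigma_k(M)$ for all $k$; so a monomial outside $\Borel(M)$ can never be moved into $\Borel(M)$ by a reverse Borel move. (Only the companion Borel move $\B{j}{i}(u')$ can drop into $G_M$.) More fundamentally, the strategy of contradicting $B$ by exhibiting an outgoing edge is not available in general: the paper's proof shows that for the critical index $j$ (the largest with $\sigma_j(n_t)>\sigma_j(M)$, where $n_t$ is the lex-first factor of $B$), every other factor $n_s$ of a type $N$ sink satisfies $\sigma_j(n_s)=\sigma_j(N)$, which blocks precisely the swaps you would need. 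A type $N$ sink can genuinely be a sink; the contradiction is not that $B$ has an outgoing edge, but that no type $M$ element of $S_\mu$ can exist at all. That is proved by a global counting argument: $\sigma_j$ is additive over factorizations of $\mu$, so a factorization containing some $m\in\Borel(M)$ with $\sigma_j(m)\le\sigma_j(M)<\sigma_j(n_t)$ forces, by pigeonhole, some other factor $w_k$ with $\sigma_j(w_k)>\sigma_j(N)$, hence $w_k\in\Borel(M)$, hence $\sigma_j(w_k)\le\sigma_j(M)<\sigma_j(N)$, a contradiction. None of this counting idea appears in your sketch, and it is not recoverable from the inductive data you set up.

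Two smaller structural points. First, the paper's proof of this proposition is not an induction on $\deg(\mu)/d$ at all; the induction (via Corollaries \ref{cor:GraphReduction} and \ref{c:describesinks}) is deferred to Theorem \ref{thm:mainKoszul}, where the present proposition is used as the non-inductive engine. Your framework of reducing to $\vec{\Gamma}_{\mu/M}$ and $\vec{\Gamma}_{\mu/N}$ and invoking the inductive hypothesis there does not obviously yield the needed conclusion, since knowing the sink types of the two subfibers says nothing directly about whether both types of elements can coexist in $S_\mu$. Second, the useful dichotomy is stronger than ``no type $M$ sink'': the existence of a type $N$ sink excludes all type $M$ elements, and your argument would need to establish something of that strength to be combined with Corollary \ref{c:describesinks} in the main theorem.
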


\begin{proof}
  Suppose that $\vec{\Gamma}_{\mu}$ has a sink of type $N$.  We will show that in fact it has no monomials of type $M$.

  To that end, suppose that $Y=Y_{n_{1}}\dots Y_{n_{t}}$ is a sink, so $\mu=n_{1}\dots n_{t}$ with each $n_{i}\in G_{N}$.  Without loss of generality, we may assume that $n_{t}$ is the last of these factors in the fiber sink order, i.e., first in the lex order.  Since
%
  $n_{t} \notin \borel(M)$, there must be an index $j$ with 
\[
\sigma_j(n_{t}) > \sigma_j(M).
\]
Let $j$ be the maximal such index.   Since $n_{t} \in \borel(N)$, we have $\sigma_j(M) < \sigma_j(n_{t}) \leq \sigma_j(N)$. 
Moreover, observe  that $\sigma_j(n_{t}) > \sigma_{j+1}(n_{t})$, since otherwise we would have $\sigma_{j+1}(n_{t}) = \sigma_j(n_{t}) > \sigma_j(M) \geq \sigma_{j+1}(M)$, contradicting the maximality of $j$.  Thus $x_j \text{ divides } n_{t}$.  
	
The key ingredient in our proof is that, for any $s\neq t$, we have
\[
\sigma_j(n_{s}) = \sigma_j(N).  
\]
	
Indeed, suppose otherwise.  Then, as $n_{s} \in \borel(N)$, we would have $\sigma_j(n_{s}) < \sigma_j(N)$.  By Lemma~\ref{lem:borel1} there would then exist an $i< j$ such that $\frac{x_j}{x_i}n_{s} \in \borel(N)$.  Meanwhile, since $x_{j}$ divides $n_{t}$, we also have $\B{j}{i}(n_{t})=\frac{x_i}{x_j}n_{t} \in \borel(N)$.  But then
replacing $n_{s}$ with $\RB{i}{j}(n_{s})$ and $n_{t}$ with $w=\B{j}{i}(n_{t})$ creates a new element $Y'\in S_{\mu}$.  Now $Y_{w}$ comes after $Y_{n_{t}}$ in the fiber sink variable order, and $Y_{n_{t}}$ is the last variable dividing $Y$, so $Y'$ comes after $Y$ in the fiber sink order.  Thus, the directed edge from $Y$ to $Y'$ contradicts the assumption that $Y$ is a sink.



We now prove that $S_{\mu}$ has no elements of type $M$.  Indeed, suppose we can write 
$\mu = m \cdot \prod_{k = 1}^{t-1} w_k$
with $m \in G_{M}$ and each $w_k \in \borel(I)$.  We have
\[
\sigma_j(\mu) = \sigma_j(m) + \sum_{k=1}^{t-1}\sigma_j(w_k) = \sum_{s = 1}^t \sigma_j(n_s) = \sigma_{j}(n_{t}) + \sum_{k=1}^{t-1}\sigma_{j}(n_{s}). 
\]
As $m \in \borel(M)$, we must have $\sigma_j(m) \leq \sigma_j(M) < \sigma_j(n_t)$.  
Consequently,
\[
\sum_{k=1}^{t-1}\sigma_j(w_k) > \sum_{s = 1}^{t-1} \sigma_j(n_s) = (t-1)\sigma_j(N). 
\]
By the pigeonhole principle, this means $\sigma_{j}(w_{k})>\sigma_{j}(N)$ for some $k$.  Immediately, we have $w_{k}\not\in\Borel(N)$, so $w_{k}\in\Borel(M)$.  But then we have $\sigma_{j}(w_{k})\leq \sigma_{j}(M)<\sigma_{j}(n_{t})\leq \sigma_{j}(N)$, a contradiction.

Thus, the existence of a sink of type $N$ prevents the existence of any elements of type $M$, as desired.
\end{proof}

Putting all this together, we conclude that the toric ideal of a two-Borel ideal is Koszul.

\begin{thm}\label{thm:mainKoszul}
Let $I=\Borel(M,N)$ be a two-Borel ideal.  Then $T(I)$ has a quadratic Gr\"obner basis in the fiber sink order.  In particular, the toric ring $\kk[I]$ is Koszul.
\end{thm}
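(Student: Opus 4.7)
The plan is to assemble the pieces built up in this section into the final result. By Proposition \ref{prop:GQuadratic}, it suffices to show that every nonempty directed fiber graph $\vec{\Gamma}_\mu(I)$ has a unique sink, and then the Koszul conclusion follows from the fact (cited in Section \ref{s:intro}) that a quotient of a polynomial ring by an ideal with a quadratic Gröbner basis is Koszul.

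So fix a nonempty multidegree $\mu$; I will prove that $\vec{\Gamma}_\mu(I)$ has a unique sink by induction on the $t$-degree $k$, where $\mu$ has total degree $kd$. The base case $k=1$ is trivial because $\vec{\Gamma}_\mu$ consists of a single vertex. For the inductive step, first invoke Corollary \ref{cor:GraphReduction} to replace $I=\Borel(M,N)$ by $I'=\Borel(M',N')$, where $M'$ and $N'$ are the lex-last monomials of $\Borel(M)$ and $\Borel(N)$ that divide $\mu$; this does not change the graph but ensures that both Borel generators divide $\mu$, which is the hypothesis required by Proposition \ref{p:uniquesink} and Corollary \ref{c:describesinks}. (If $M'$ and $N'$ turn out to be Borel-comparable, we are in the principal-Borel case, which is the result of De~Negri \cite{DN99}; otherwise $I'$ is genuinely two-Borel and all our prior results apply verbatim.)

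Now apply Proposition \ref{p:uniquesink} to conclude that every sink of $\vec{\Gamma}_\mu(I')$ is of the same type, say all of type $M$ (the type $N$ case is symmetric). By Corollary \ref{c:describesinks}, every such sink has the form $Y_{M'}\cdot Z$ where $Z$ is a sink of $\vec{\Gamma}_{\mu/M'}(I')$. Since $\mu/M'$ has $t$-degree $k-1$, the inductive hypothesis yields a unique $Z$, and hence a unique sink $Y_{M'}Z$ in $\vec{\Gamma}_\mu$. This completes the induction and therefore the proof that every $\vec{\Gamma}_\mu$ has a unique sink.

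I do not expect any real obstacle beyond bookkeeping: the main conceptual work was already done in Proposition \ref{p:uniquesink} (showing sinks are all of one type) and Corollary \ref{c:describesinks} (showing each sink peels off a copy of $Y_M$ or $Y_N$), so the final proof is essentially an induction that stitches these together with the reduction of Corollary \ref{cor:GraphReduction}. The most delicate point to handle carefully is that the reduction from $(M,N)$ to $(M',N')$ must be reapplied at each step of the induction, since $\mu/M'$ need not be divisible by $M'$ or $N'$; but this is automatic from Corollary \ref{cor:GraphReduction} and preserves the fiber sink order since the order on the remaining variables is unchanged.
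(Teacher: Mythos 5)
Your proposal is correct and follows essentially the same route as the paper's proof: reduce to unique sinks via Proposition \ref{prop:GQuadratic}, apply Corollary \ref{cor:GraphReduction} to pass to $M'$ and $N'$, use Proposition \ref{p:uniquesink} to fix the type of all sinks, and then peel off $Y_{M'}$ or $Y_{N'}$ via Corollary \ref{c:describesinks} and induct (the paper inducts on divisibility of $\mu$, which amounts to your induction on $t$-degree). Your explicit handling of the degenerate case where $M'$ and $N'$ become Borel-comparable, and of re-applying the reduction at each inductive step, are careful touches the paper relegates to remarks but do not change the argument.
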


\begin{proof}
  By Proposition 4.5, it suffices to show that every nonempty $\vec{\Gamma}_{\mu}$ has a unique sink.  If $\mu=1$, then $\vec{\Gamma}_{\mu}$ consists of a single point and there is nothing to prove.  Otherwise, we induct on divisibility.

  By Corollary 5.2, we may if necessary replace $M$ and $N$ with $M'$ and $N'$, the lex-last elements of $\Borel(M)$ and $\Borel(N)$ dividing $\mu$.

  By Proposition 5.11, either every sink has type $M$ or every sink has type $N$.  In the first case, every sink has the form $Y_{M}Z_{M}$, where $Z_{M}$ is a sink of $\vec{\Gamma}_{\frac{\mu}{M}}$.  In the second case, every sink has the form $Y_{N}Z_{N}$, where $Z_{N}$ is a sink of $\vec{\Gamma}_{\frac{\mu}{N}}$.  Inductively, both $\vec{\Gamma}_{\frac{\mu}{M}}$ and $\vec{\Gamma}_{\frac{\mu}{N}}$ have unique sinks.  Consequently, in either case, $\vec{\Gamma}_{\mu}$ has a unique sink.
\end{proof}

\section{Concluding Remarks}\label{s:reesproof} 

We now make the connection between Koszulness of the toric ring $\kk[I]$ and Koszulness of the Rees algebra $\mathcal{R}_I$.  The following result, due to Herzog, Hibi, and Vladoiu, allows a straightforward relationship.

\begin{thm}\cite[Theorem~5.1]{HHV05}\label{thm:ReductionToToric}
Let $I$ be a Borel  ideal minimally generated by $\{w_1,\ldots,w_k\}$, with Rees ideal $\mathcal{R}(I)$ as in Definition \ref{d:rees}.  Let $<$ be any term order on $\kk[Y_{w_1},\ldots,Y_{w_k}]$ and let $<_{\mathcal{R}}$ be the elimination order on $\mathcal{R}$ defined by $x^\alpha Y^\beta<x^\gamma Y^\delta$ whenever either $x^\alpha<_{\lex}x^\gamma$ or $x^\alpha=x^\gamma$ and $Y^\beta<Y^\delta$ in $\kk[Y_{w_1},\ldots,Y_{w_k}]$.  Suppose $\mathcal{G}_{<}(T(I))$ is a Gr\"obner basis for the toric ideal $T(I)$ in $\kk[Y_{w_1},\ldots,Y_{w_k}]$ with respect to $<$.  Then
\[
\{x_jY_{w_t}-x_iY_{w_u} : x_{j}w_{t}=x_{i}w_{u}  \}\cup \mathcal{G}_{<}(T(I))
\]
is a Gr\"obner basis for the Rees ideal $\mathcal{R}(I)$ with respect to $<_S$ in $S$.
\end{thm}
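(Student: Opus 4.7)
The plan is to show that the displayed set lies in $\mathcal{R}(I)$ and satisfies Buchberger's criterion with respect to $<_{\mathcal{R}}$. Containment is immediate: each $x_jY_{w_t}-x_iY_{w_u}$ with $x_jw_t=x_iw_u$ maps under $\rho$ to $(x_jw_t-x_iw_u)t=0$, and $\mathcal{G}_{<}(T(I))\subset T(I)\subset \mathcal{R}(I)$ via the natural inclusion $\kk[Y_{w_1},\ldots,Y_{w_k}]\hookrightarrow\mathcal{R}$. Next, identify leading terms under $<_{\mathcal{R}}$. Because the order compares the $x$-part lexicographically first and only breaks ties through the $<$-order on the $Y$'s, the leading term of every linear syzygy $x_jY_{w_t}-x_iY_{w_u}$ is the term whose $x$-variable is lex-larger, while the leading term of any $g\in\mathcal{G}_{<}(T(I))$ is a pure monomial in the $Y$'s. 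This clean split of leading terms into ``mixed $xY$'' and ``pure $Y$'' shapes is the structural reason the argument should succeed.

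For Buchberger's criterion, I would split into three cases. (a) S-pairs inside $\mathcal{G}_{<}(T(I))$: both leading terms lie in $\kk[Y_{w_1},\ldots,Y_{w_k}]$, so the S-polynomial and every step of its division-algorithm reduction stay in that subring, and reduce to zero by the hypothesis that $\mathcal{G}_{<}(T(I))$ is a Gr\"obner basis of $T(I)$. (b) S-pairs between a linear syzygy $f=x_aY_{w_t}-x_bY_{w_u}$ (say with leading term $x_aY_{w_t}$) and a $g\in\mathcal{G}_{<}(T(I))$ whose leading $Y$-monomial is divisible by $Y_{w_t}$: the lcm picks up the factor $x_a$ only from $f$, so a single reduction by $f$ replaces $x_a$ by $x_b$ and turns the S-polynomial into $x_b$ times a pure $Y$-binomial lying in $T(I)$, which then reduces by $\mathcal{G}_{<}(T(I))$. (c) S-pairs between two linear syzygies whose leading terms share either an $x$-variable or a $Y_w$-variable: the resulting binomial has the shape $x_bx_cY_{w_u}Y_{w_p}-x_ax_dY_{w_t}Y_{w_q}$ (together with mild variants), which must be written as a telescoping sum of further linear syzygies.

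The main obstacle is case (c). Its combinatorial content is that any ``diamond'' identity $x_ax_c\cdot w_{\bullet}=x_bx_d\cdot w_{\bullet'}$ arising from the overlap can be factored through an intermediate minimal generator $w^\ast\in\gens(I)$, so that both routes around the diamond decompose as chains of single-variable exchanges matching the listed linear syzygies. Existence of $w^{\ast}$ follows from the Borel hypothesis on $I$: two one-variable exchanges out of a monomial in $I$ commute, and the intermediate monomial stays in $I$ since $I$ is closed under the Borel moves of Definition~\ref{d:borel}; passing to its minimal-generator predecessor yields the required $w^\ast$. Granted this, each case-(c) S-polynomial splits as two linear-syzygy reductions (possibly followed by a pure-$Y$ reduction by $\mathcal{G}_{<}(T(I))$), which together with (a) and (b) verify Buchberger and prove the theorem.
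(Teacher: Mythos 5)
This statement is not proved in the paper at all: it is quoted verbatim from Herzog--Hibi--Vladoiu \cite[Theorem~5.1]{HHV05}, so the only ``proof'' the paper offers is the citation. Your attempt therefore has to be measured against the argument in that reference, and as written it has two genuine gaps. First, verifying Buchberger's criterion for the displayed set only shows that it is a Gr\"obner basis of the ideal \emph{it generates}; you never show that the linear syzygies together with $T(I)$ generate all of $\mathcal{R}(I)$. That generation statement (that $I$ is ``of fiber type'') is the main content of the theorem and is false for general equigenerated monomial ideals, so it cannot be waved through --- it is exactly where the Borel hypothesis must be used. The standard way to get both generation and the Gr\"obner property at once is to show that every binomial of $\mathcal{R}(I)$ whose two terms have distinct $x$-parts has its $<_{\mathcal{R}}$-leading term divisible by the leading term of one of the listed linear relations; since $\mathcal{R}(I)$ is generated by binomials and reduction preserves membership, this reduces everything to the toric case.

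Second, the key divisibility claim just described is precisely the \emph{$\ell$-exchange property} of \cite{HHV05}, and it is the step your cases (b) and (c) elide. In case (b) the S-polynomial is $x_aY^{\beta'}-x_b\frac{Y^{\beta}}{Y_{w_t}}Y_{w_u}$, and your proposed ``single reduction by $f$'' requires $Y_{w_t}$ to divide $Y^{\beta'}$, which fails in general; one must instead produce \emph{some} variable $Y_{w_s}$ dividing $Y^{\beta'}$ and some index $i>a$ with $x_i\mid w_s$ and $x_aw_s/x_i\in\gens(I)$. For Borel ideals this follows from a degree count (if $x_a$ is the lex-first variable where the two $x$-parts of a binomial in $\mathcal{R}(I)$ differ, comparing $\deg_{x_i}$ for $i\le a$ on both sides forces some $w_s$ in the lex-larger term to involve a variable $x_i$ with $i>a$, and then $x_aw_s/x_i$ is a Borel move, hence again a minimal generator), but this verification is the heart of the matter and is absent. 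Similarly, in case (c) the ``intermediate generator $w^{\ast}$'' need not exist as stated: the exchanges $x_aw/x_b$ occurring in the linear syzygies go in both directions (Borel and reverse Borel), the intermediate monomial $x_ax_cw_t/(x_bx_d)$ need not even be a monomial when $b=d$ and $\deg_{x_b}w_t=1$ (in which sub-case the S-polynomial is instead $x_b$ times another linear syzygy), and closure of $I$ under Borel moves does not by itself place the intermediate monomial in $\gens(I)$ in the reverse direction. So the architecture of your argument is reasonable, but the Borel-specific combinatorics that make the reductions actually terminate --- the content of the $\ell$-exchange property --- still need to be supplied.
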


Theorem \ref{thm:ReductionToToric} completes the proof that the Rees ideal of a two-Borel ideal is Koszul.

\begin{cor}\label{t:thegoal}
The Rees algebra of a two-Borel ideal is Koszul.
\end{cor}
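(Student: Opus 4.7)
The plan is to deduce Corollary \ref{t:thegoal} by combining Theorem \ref{thm:mainKoszul} with Theorem \ref{thm:ReductionToToric}, so essentially all the work has already been done. First I would invoke Theorem \ref{thm:mainKoszul} to obtain a quadratic Gr\"obner basis $\mathcal{G}_{<}(T(I))$ of the toric ideal $T(I)\subset \kk[Y_{w_1},\ldots,Y_{w_k}]$, where $<$ is the fiber sink order from Definition \ref{d:fibersinkorder}.

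Next I would feed this $<$ into Theorem \ref{thm:ReductionToToric}, producing an elimination order $<_{\mathcal{R}}$ on the Rees ring $\mathcal{R}=\kk[x_1,\ldots,x_n][Y_{w_1},\ldots,Y_{w_k}]$, together with a Gr\"obner basis for the Rees ideal $\mathcal{R}(I)$ consisting of $\mathcal{G}_{<}(T(I))$ augmented by the binomials $\{x_jY_{w_t}-x_iY_{w_u}: x_jw_t=x_iw_u\}$. The key observation is that every element of this augmenting set is already a quadric in the $\mathcal{R}$-grading (one factor of $x$ times one factor of $Y$ in each monomial), and the elements of $\mathcal{G}_{<}(T(I))$ are quadric by Theorem \ref{thm:mainKoszul}. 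Thus the whole Gr\"obner basis for $\mathcal{R}(I)$ consists of quadrics.

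Finally, by the Gr\"obner-basis criterion for Koszulness quoted in the introduction (\cite[Section 6.1]{EH}), the existence of a quadratic Gr\"obner basis for the defining ideal of the Rees algebra $\mathcal{R}_I=\mathcal{R}/\mathcal{R}(I)$ implies that $\mathcal{R}_I$ is Koszul. There is no real obstacle here; the main content of the corollary lies entirely in Theorem \ref{thm:mainKoszul}, and Theorem \ref{thm:ReductionToToric} provides a turnkey mechanism to lift that result from the toric side to the Rees side.
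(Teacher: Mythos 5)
Your proposal is correct and follows exactly the paper's own route: apply Theorem \ref{thm:mainKoszul} to get a quadratic Gr\"obner basis for $T(I)$, then lift via Theorem \ref{thm:ReductionToToric}, noting that the added binomials $x_jY_{w_t}-x_iY_{w_u}$ are already quadrics. Your version just spells out slightly more explicitly why the augmented basis remains quadratic, which the paper leaves implicit.
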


\begin{proof}
  Let $I$ be a two-Borel ideal.  By Theorem \ref{thm:mainKoszul}, the toric ideal $T(I)$ has a quadratic Gr\"obner basis.  By Theorem \ref{thm:ReductionToToric}, the Rees ideal of $I$ has a quadratic Grobner basis, 
  and in particular is Koszul.
\end{proof}

We close with some questions for future research.

\begin{question}
If $I$ is a two-Borel ideal, is the toric ring $\kk[I]$ Cohen-Macaulay?
\end{question}

\begin{question}
Let $I_1,\ldots,I_k$ be ideals in a polynomial ring, where each $I_j$ is either a principal Borel ideal or a two-Borel ideal.  Is the multi-Rees algebra $\mathcal{R}_{I_1,\ldots,I_k}$ (see~\cite{BC16}) Koszul?  What about Cohen-Macaulay or normal?
\end{question}

\noindent \textbf{Acknowledgments}: We thank Aldo Conca for introducing us to this problem in his talk at the conference honoring Craig Huneke in July 2016, and we thank Travis Grigsby for helpful conversations related to his master's thesis. We performed the computations leading to this paper in Macaulay2 \cite{M2}, often using resources from the Oklahoma State University High Performance Computing Center, which is supported in part through National Science Foundation grant ACI--1126330. The work in this paper was partially supported by grants from the Simons Foundation (\#422465 to Christopher Francisco and \#202115 to Jeffrey Mermin) and an Oklahoma State University College of Arts \& Sciences Summer Research Program grant (to Jay Schweig). 

\bibliography{ReesBib}{}
\bibliographystyle{plain}

\end{document}